\newcommand{\rrvert}{\vert}
\newcommand{\rrVert}{\Vert}
\newcommand{\llvert}{\vert}
\newcommand{\llVert}{\Vert}
\newtheorem{lemma}{Lemma}[section]
\newtheorem{theorem}{Theorem}[section]
\newtheorem{corollary}{Corollary}[section]
\newtheorem{proposition}{Proposition}[section]
\newcommand{\fraca}[2]{{#1}/{#2}}
\newcommand{\overset}{\stackrel}
\newcommand{\m}{\mathcal}
\newcommand{\mb}{\mathbb}
\newcommand{\argmin}{\mathop{\operatorname{arg\,min}}}
\newcommand{\supp}{\operatorname{supp}}
\newcommand{\tr}{\operatorname{tr}}
\newcommand{\med}{\operatorname{med}}
\newcommand{\var}{\operatorname{Var}}
\newcommand{\proj}{\operatorname{Proj}}
\newcommand{\op}{{\operatorname{Op}}}
\newcommand{\rank}{\operatorname{rank}}
\newcommand{\eps}{\varepsilon}
\begin{document}
\begin{frontmatter}

\title{Geometric median and robust estimation in Banach spaces}
\runtitle{Geometric median and robust estimation}

\begin{aug}
%%%% inicialai - be tarpu
\author[A]{\inits{S.}\fnms{Stanislav}~\snm{Minsker}\corref{}\ead[label=e1]{sminsker@math.duke.edu}}% \and
%\author{\inits{}\fnms{}~\snm{}\thanksref{}\ead[label=e2]{}}
%\author{\inits{}\fnms{}~\snm{}}
%%\runauthor{} %% auto
%\dedicated{}
\address[A]{Mathematics Department, Duke University, Box 90320,
Durham, NC 27708-0320, USA.\\ \printead{e1}}
%\address[]{}
\end{aug}

% HISTORY:
\received{\smonth{11} \syear{2013}}
\revised{\smonth{5} \syear{2014}}

% ABSTRACT
%
\begin{abstract}
In many real-world applications, collected data are contaminated by
noise with heavy-tailed distribution and might contain outliers of
large magnitude.
In this situation, it is necessary to apply methods which produce
reliable outcomes even if the input contains corrupted measurements.
We describe a general method which allows one to obtain estimators with
tight concentration around the true parameter of interest taking values
in a Banach space.
Suggested construction relies on the fact that the geometric median of
a collection of independent ``weakly concentrated'' estimators
satisfies a much
stronger deviation bound than each individual element in the collection.
Our approach is illustrated through several examples,
including sparse linear regression and low-rank matrix recovery problems.
\end{abstract}

% KEYWORDS
% visi is mazosios raides ir pagal abecele
%
\begin{keyword}
\kwd{distributed computing}
\kwd{heavy-tailed noise}
\kwd{large deviations}
\kwd{linear models}
\kwd{low-rank matrix estimation}
\kwd{principal component analysis}
\kwd{robust estimation}
\end{keyword}
\end{frontmatter}

%s1 #&#
\section{Introduction}
\label{sec:intro}

Given an i.i.d. sample $X_1,\ldots,X_n\in\mb R$ from a distribution
$\Pi$ with $\var(X_1)<\infty$ and $t>0$, is it possible to construct
an estimator
$\hat\mu$ of the mean $\mu=\mb EX_1$ which would satisfy
%
%e1.1 #&#
%
\begin{equation}
\label{eq:question} \Pr \biggl(|\hat\mu-\mu|>C\sqrt{\var(X_1)
\frac{t}{n}} \biggr)\leq\mathrm{e}^{-t}
\end{equation}
for some absolute constant $C$ without \textit{any} extra assumptions
on $\Pi$?
What happens if the sample contains a fixed number of outliers of
arbitrary nature? Does the estimator still exist?

A (somewhat surprising) answer is yes, and several ways to construct
$\hat\mu$ are known.
The earliest reference that we are aware of is the book by
Nemirovski and Yudin \cite{Nemirovski1983Problem-complex00}, where
related question was investigated in the context of stochastic optimization.
We learned about problem (\ref{eq:question}) and its solution from the
work of Oliveira and Lerasle \cite{lerasle2011robust} who
used the ideas in spirit of \cite{Nemirovski1983Problem-complex00} to
develop the theory of ``robust empirical mean estimators''.
Method described in \cite{lerasle2011robust} consists of the following
steps: divide the given sample into $V\approx t$ blocks, compute the
sample mean within each block and then take the median\vadjust{\goodbreak} of these sample means.
A relatively simple analysis shows that the resulting estimator indeed
satisfies (\ref{eq:question}).
Similar idea was employed earlier in the work of Alon, Matias and
Szegedy \cite{alon1996space} to construct randomized algorithms for
approximating the so-called ``frequency moments'' of a sequence.
Recently, the aforementioned ``median of the means'' construction
appeared in \cite{bubeck2012bandits} in the context of multi-armed
bandit problem under weak assumptions on the reward distribution.
A different approach to the question (\ref{eq:question}) (based on
PAC-Bayesian truncation) was given in \cite{catoni2012challenging}.
A closely related independent recent work \cite{Hsu2013Loss-minimizati00}
applies original ideas of Nemirovski and
Yudin to general convex loss minimization.\looseness=-1

The main goal of this work is to design a general technique that allows
construction of estimators satisfying a suitable version of (\ref{eq:question})
for Banach space-valued $\mu$.
To achieve this goal, we show that a collection of independent
estimators of a Banach space-valued parameter can be transformed into a
new estimator which preserves the rate and admits much tighter
concentration bounds.
The method we propose is based on the properties of a
\textit{geometric median}, which is one of the possible extensions of a
univariate median to higher dimensions.

Many popular estimators (e.g., Lasso \cite{tibshirani1996regression}
in the context of high-dimensional linear regression) admit strong
theoretical guarantees if the distribution of the noise satisfies
restrictive assumptions (such as sub-Gaussian tails).
An important question that we attempt to answer is: can one design
algorithms which preserve nice properties of existing techniques and at
the same time:
\begin{enumerate}[(3)]
\item[(1)] admit strong performance guarantees under weak assumptions
on the noise;
\item[(2)] are not affected by the presence of a fixed number of
outliers of arbitrary nature and size;
\item[(3)] can be implemented in parallel for faster computation with
large data sets.
\end{enumerate}
Our results imply that in many important applications the answer is positive.
In Section~\ref{sec:examples}, we illustrate this assertion with
several classical examples, including principal component analysis,
sparse linear regression and low-rank matrix recovery.
In each case, we present non-asymptotic probabilistic bounds describing
performance of proposed methods.

For an overview of classical and modern results in robust statistics,
see \cite{Huber2009Robust-statisti00,hubert2008high}, and references therein.
Existing literature contains several approaches to estimation in the
presence of heavy-tailed noise based on the aforementioned estimators
satisfying (\ref{eq:question}).
However, most of the previous work concentrated on one-dimensional
versions of (\ref{eq:question}) and used it as a tool to solve
intrinsically high-dimensional problems.
For example, in \cite{lerasle2011robust} authors develop robust
estimator selection procedures based on the medians of empirical risks
with respect to disjoint subsets of the sample.
While this approach admits strong theoretical guarantees, it requires
several technical assumptions that are not always easy to check it practice.
Another related work \cite{audibert2011robust} discusses robust
estimation in the context of ridge regression.
Proposed method is based on a ``min--max'' estimator which has good
theoretical properties but can only be evaluated approximately based on
heuristic methods.
It is also not immediately clear if this technique can be extended to
robust estimation in other frameworks.
An exception is the approach described in \cite{Nemirovski1983Problem-complex00}
and further explored in \cite{Hsu2013Loss-minimizati00}, where authors
use a version of the
multidimensional median for estimator selection.
However, this method has several weaknesses in statistical applications
when compared to our technique; see Section~\ref{sec:main} for more
details and discussion.
The main results of our work require minimal assumptions, apply to a
wide range of models, and allow to use many existing algorithms as a
subroutine to produce robust estimators which can be evaluated exactly
via a simple iterative scheme.

%s2 #&#
\section{Geometric median}

Let $\mb X$ be a Banach space with norm $\|\cdot\|$, and let $\mu$ be
a probability measure on $(\mb X,\|\cdot\|)$.
Define the \textit{geometric median} (also called the spatial median,
Fermat--Weber point \cite{weber1929uber} or Haldane's median \cite
{haldane1948note}) of $\mu$ as
\[
x_\ast=\argmin_{y\in\mb X} \int_\mb X \bigl(\|y-x
\|-\|x\| \bigr)\mu(\mathrm{d}x).
\]
For other notions of the multidimensional median and a nice survey of
the topic, see \cite{small1990survey}.
In this paper, we will only be interested in a special case when $\mu$
is the empirical measure corresponding to a finite collection of points
$x_1,\ldots,x_k\in\mb X$, so that
%
%e2.1 #&#
%
\begin{equation}
\label{eq:median} x_\ast=\med(x_1,\ldots,x_k):=
\argmin_{y\in\mb X}\sum_{j=1}^k \|
y-x_j\|.
\end{equation}
Geometric median exists under rather general conditions.
For example, it is enough to assume that $\mb X=\mb Y^\ast$, where
$\mb Y$ is a separable Banach space and $\mb Y^\ast$ is its dual -- the
space of all continuous linear functionals on $\mb Y$.
This includes the case when $\mb X$ is separable and reflexive, that is,
$\mb X=  (\mb X^\ast  )^\ast$.
Moreover, if the Banach space $\mb X$ is strictly convex (i.e., $\|
x_1+x_2\|<\|x_1\|+\|x_2\|$ whenever $x_1$ and $x_2$ are not proportional),
then $x_\ast$ is unique unless all the points $x_1,\ldots,x_n$ are on
the same line.
For proofs of these results, see \cite{kemperman1987median}.
Throughout the paper, it will be assumed that $\mb X$ is separable and
reflexive.

In applications, we are often interested in the situation when $\mb X$
is a Hilbert space
(in particular, it is reflexive and strictly convex) and $\|\cdot\|$
is induced by the inner product
$ \langle\cdot,\cdot \rangle$.
In such cases, we will denote the ambient Hilbert space by $\mb H$.

The cornerstone of our subsequent presentation is the following lemma,
which states that if a given point $z$ is ``far'' from the geometric median
$x_\ast=\med(x_1,\ldots,x_k)$, then it is also ``far'' from a
constant fraction of the points $x_1,\ldots,x_k$.
We will denote $F(y):=\sum_{j=1}^k \|y-x_j\|$.
%
%f1 #&#

%
%le2.1 #&#
%
\begin{lemma}\label{lem:median}
\begin{enumerate}[(b)]
\item[(a)]Let $x_1,\ldots,x_k\in\mb H$ and let $x_\ast$ be their
geometric median.
Fix $\alpha\in  (0,\frac{1}2   )$ and assume that $z\in\mb
H$ is
such that $\|x_\ast-z\| > C_\alpha r$, where
%
%e2.2 #&#
%
\begin{equation}
\label{eq:c_alpha} C_\alpha= (1-\alpha)\sqrt{\frac{1}{1-2\alpha}}
\end{equation}
and $r>0$.
Then there exists a subset $J\subseteq  \{1,\ldots, k  \}$ of
cardinality $|J|>\alpha k$ such that for all $j\in J$, $\|x_j-z\|>r$.

\item[(b)]For general Banach spaces, the claim holds with a constant
$C_\alpha
=\frac{2(1-\alpha)}{1-2\alpha}$.
\end{enumerate}
\end{lemma}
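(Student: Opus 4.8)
The plan is to argue by contrapositive: suppose that fewer than $\alpha k$ of the points $x_j$ satisfy $\|x_j - z\| > r$, and show that $z$ must then be close to the geometric median $x_\ast$, i.e.\ $\|x_\ast - z\| \le C_\alpha r$. Let $J^c = \{j : \|x_j - z\| \le r\}$, so $|J^c| \ge (1-\alpha)k$. The key quantity to control is $F(z) = \sum_{j=1}^k \|z - x_j\|$ versus $F(x_\ast) = \min_y F(y)$; since $x_\ast$ is the minimizer, we have $F(x_\ast) \le F(z)$, and the triangle inequality applied coordinatewise gives $\|x_\ast - z\| \le \frac{1}{k}\sum_j \big(\|x_\ast - x_j\| + \|z - x_j\|\big)$ only in a weak averaged sense — the real work is to leverage the optimality of $x_\ast$ more carefully. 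For the Hilbert space case in part (a), I expect the sharper constant comes from using the \emph{subgradient} optimality condition at $x_\ast$: the vector $\sum_j \frac{x_\ast - x_j}{\|x_\ast - x_j\|}$ (with the convention that terms with $x_j = x_\ast$ are replaced by elements of the unit ball) must vanish. Pairing this identity with $x_\ast - z$ and using the inner product structure to bound the angles between $x_\ast - x_j$ and $x_\ast - z$ for $j \in J^c$ should yield the quadratic improvement encoded in $\sqrt{1/(1-2\alpha)}$.

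Concretely, for each $j \in J^c$ write $u_j = \frac{x_\ast - x_j}{\|x_\ast - x_j\|}$ and decompose $x_j = x_\ast - \|x_\ast - x_j\| u_j$. The constraint $\|x_j - z\| \le r$ together with $\|x_\ast - z\| = :R$ gives, by the law of cosines in the plane spanned by $x_\ast - z$ and $u_j$, a lower bound on $\langle u_j, \frac{x_\ast - z}{R}\rangle$ of the form (roughly) $\frac{R^2 - r^2 + \|x_\ast - x_j\|^2}{2 R \|x_\ast - x_j\|} \ge \frac{R^2 - r^2}{2R\|x_\ast-x_j\|}$ when $R > r$. Summing $\langle u_j, x_\ast - z\rangle$ over $j \in J^c$ and over $j \in J$ (where we only have the trivial bound $\langle u_j, x_\ast - z\rangle \ge -R$), and using that the full sum $\sum_j \langle u_j, x_\ast - z\rangle = 0$ by optimality, produces an inequality of the form $(1-\alpha)k \cdot(\text{positive lower bound}) \le \alpha k \cdot R$ (after handling the $\|x_\ast - x_j\|$ normalization, e.g.\ via $\|x_\ast - x_j\| \ge R - r$ or a direct comparison). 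Rearranging should give $R \le C_\alpha r$ with $C_\alpha$ as in~\eqref{eq:c_alpha}; the geometric picture in Figure~\ref{fig:01} is exactly this angle computation.

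For part (b), in a general Banach space we no longer have an inner product, so the angle argument is unavailable; instead I would use only the first-order optimality in the form $F(x_\ast) \le F(z)$ directly. For $j \in J^c$, the triangle inequality gives $\|z - x_j\| \le r$ and $\|x_\ast - x_j\| \ge \|x_\ast - z\| - r = R - r$, hence $\|x_\ast - x_j\| - \|z - x_j\| \ge R - 2r$. For $j \in J$ we use the crude bound $\|x_\ast - x_j\| - \|z - x_j\| \ge -R$ (again triangle inequality, $\|z - x_j\| \le \|x_\ast - x_j\| + R$). Summing: $0 \ge F(x_\ast) - F(z) = \sum_j \big(\|x_\ast - x_j\| - \|z - x_j\|\big) \ge (1-\alpha)k(R - 2r) - \alpha k R$, which rearranges to $R(1 - 2\alpha) \le 2(1-\alpha) r$, i.e.\ $R \le \frac{2(1-\alpha)}{1-2\alpha} r$, the claimed constant. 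One must be slightly careful at the boundary (the case $\|x_\ast - x_j\| < r$ for some $j \in J^c$, where the lower bound $R - 2r$ could be negative — but then it is dominated since we only need an inequality), and in part (a) the main obstacle is making the law-of-cosines / angle-summation bookkeeping precise, including the degenerate case where some $x_j$ coincide with $x_\ast$ and the subgradient convention kicks in.
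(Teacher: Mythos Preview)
For part (a), your approach is essentially the paper's: both use first-order optimality of $x_\ast$ (the paper via the directional derivative $DF(x_\ast;z-x_\ast)\ge 0$, you via $\sum_j u_j=0$) paired against the direction $z-x_\ast$, together with a bound on the cosines $\langle u_j,(x_\ast-z)/R\rangle$ for the ``close'' indices. One caution: your suggested handling of the $\|x_\ast-x_j\|$ normalization via $\|x_\ast-x_j\|\ge R-r$ will not recover the sharp constant. The law-of-cosines lower bound you wrote, $\frac{R^2-r^2+d_j^2}{2Rd_j}$ with $d_j=\|x_\ast-x_j\|$, is not monotone in $d_j$; the correct move is AM--GM, which gives the uniform lower bound $\sqrt{1-r^2/R^2}$, and then $(1-\alpha)\sqrt{1-r^2/R^2}\le \alpha$ rearranges to $R\le C_\alpha r$. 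This is exactly the paper's geometric observation $\sin\gamma_j\le r/R$ (Figure~\ref{fig:01}) in algebraic form. Dropping the $d_j^2$ term, or using only $d_j\ge R-r$, yields a strictly worse constant.

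For part (b), your argument is correct and takes a genuinely different, more elementary route than the paper. The paper repeats the directional-derivative strategy in the Banach setting, which requires a subdifferential lemma (their Appendix~\ref{part2}) to produce functionals $g_j^\ast\in\partial\|x_j-x_\ast\|$ and then bounds $\langle g_j^\ast,z-x_\ast\rangle/\|z-x_\ast\|>1-2/C_\alpha$ for the close indices. You bypass all of this by using only the zeroth-order optimality $F(x_\ast)\le F(z)$ together with two triangle inequalities, obtaining $0\ge |J^c|(R-2r)-|J|R$ directly. Both routes produce the same constant $\frac{2(1-\alpha)}{1-2\alpha}$; yours is shorter and needs no convex-analysis machinery, while the paper's has the virtue of paralleling the Hilbert-space proof and making transparent why the constant degrades (the angle bound $\sqrt{1-1/C_\alpha^2}$ is replaced by the cruder $1-2/C_\alpha$). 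Note that your boundary worry is not about individual $\|x_\ast-x_j\|$ but about the sign of $R-2r$; since $C_\alpha>2$ for every $\alpha\in(0,1/2)$, the case $R\le 2r$ is already covered, so in a contradiction argument starting from $R>C_\alpha r$ no case split is needed.
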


\begin{pf}
(a)
Assume that the implication is not true.
Without loss of generality, it means that $\|x_i-z\|\leq r$,
$i=1,\ldots,
  \lfloor(1-\alpha)k  \rfloor+1$.

Consider the directional derivative
\[
DF(x_\ast;z-x_\ast):=\lim_{t\searrow0}
\frac{F(x_\ast+t(z-x_\ast
))-F(x_\ast)}{t}
\]
at the point $x_\ast$ in direction $z-x_\ast$.
Since $x_\ast$ minimizes $F$ over $\mb H$, $DF(x_\ast;z-x_\ast)\geq0$.
On the other hand, it is easy to see that
%
%e2.3 #&#
%
\begin{equation}
\label{eq:derivative1} \frac{DF(x_\ast;z-x_\ast)}{\|z-x_\ast\|}=-\sum_{j: x_j\ne x_\ast}
\frac{ \langle x_j-x_\ast,z-x_\ast \rangle}{\|x_j-x_\ast
\|\|z-x_\ast\|
}+\sum_{j=1}^k I
\{x_j=x_\ast\}.
\end{equation}
For $j=1,\ldots, \lfloor(1-\alpha)k\rfloor+1$ and $\gamma
_j=\arccos  (\frac{ \langle x_j-x_\ast,z-x_\ast
\rangle}{\|x_j-x_\ast\|\|
z-x_\ast\|}  ) $, we clearly have (see Figure~\ref{fig:01})
\[
\frac{ \langle x_j-x_\ast,z-x_\ast \rangle}{\|x_j-x_\ast
\|\|z-x_\ast\|
}=\cos(\gamma_j)> \sqrt{1-
\frac{1}{C_\alpha^2}},
\]
while $\frac{ \langle x_j-x_\ast,z-x_\ast \rangle}{\|
x_j-x_\ast\|\|z-x_\ast
\|}\geq-1$ for $j>\lfloor(1-\alpha)k\rfloor+1$.
This yields
\[
\frac{DF(x_\ast;z-x_\ast)}{\|z-x_\ast\|}< -(1-\alpha)k\sqrt {1-\frac{1}{C_\alpha^2}}+\alpha k
\leq0
\]
whenever $C_\alpha\geq(1-\alpha)\sqrt{\frac{1}{1-2\alpha}}$,
which leads to a contradiction.

\begin{figure}[t]

\includegraphics{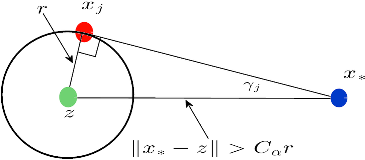}

%\includegraphics[width=0.63\linewidth,height=0.21
%\textheight]{angle10.pdf}
\caption{Geometric illustration.}
\label{fig:01}
\end{figure}

(b)
See \hyperref[part2]{Appendix}.\vadjust{\goodbreak}%~\ref{part2}.
\end{pf}

%re2.1 #&#
%
\begin{remark}
\begin{enumerate}[(2)]
\item[(1)]Notice that in a Hilbert space, the geometric median $x_\ast
=\med
(x_1,\ldots,x_k)$ always belongs to the convex hull of $\{x_1,\ldots
,x_k\}$.
Indeed, if $x_\ast$ coincides with one of $x_j$'s,\vadjust{\goodbreak} there is nothing to prove.
Otherwise, since for any $v\in\mb H$ we have $DF(x_\ast;v)\geq0$ and
$DF(x_\ast;-v)\geq0$, it follows from (\ref{eq:derivative1}) that
$\sum_{j=1}^k \frac{x_j-x_\ast}{\|x_j-x_\ast\|}=0$,
which yields the result.

\item[(2)]In general Banach spaces, it might be convenient to consider
\[
\hat x_\ast:=\argmin_{y\in\operatorname{co}(x_1,\ldots,x_k)}\sum_{j=1}^k
\| y-x_j\|,
\]
where $\operatorname{co}(x_1,\ldots,x_k)$ is the convex hull of $\{
x_1,\ldots
,x_k\}$.
The claim of Lemma~\ref{lem:median} remains valid for $\hat x_\ast$
whenever $z\in\operatorname{co}(x_1,\ldots,x_k)$.
\end{enumerate}
\end{remark}

%s3 #&#
\section{``Boosting the confidence'' by taking the geometric median of
independent estimators}\label{sec:main}

A useful property of the geometric median is that it transforms a
collection of independent estimators that are ``weakly'' concentrated
around the true parameter of interest into a single estimator which
admits significantly tighter deviation bounds.
For $0<p<\alpha<\frac{1} 2$, define
\[
\psi(\alpha;p)=(1-\alpha)\log\frac{1-\alpha}{1-p}+\alpha\log \frac{\alpha}{p}.
\]
%
%th3.1 #&#

\begin{theorem}
\label{th:main}
Assume that $\mu\in\mb X$ is a parameter of interest, and let $\hat
\mu_1,\ldots,\hat\mu_k\in\mb X$ be a collection of independent
estimators of $\mu$.
Fix $\alpha\in  (0,\frac{1}{2}  )$. Let $0<p<\alpha$ and
$\eps
>0$ be such that for all~$j$, $1\leq j\leq k$,
%
%e3.1 #&#
%
\begin{equation}
\label{weak_con} \Pr \bigl(\|\hat\mu_j-\mu\|>\eps \bigr)\leq p.
\end{equation}
Set
%
%e3.2 #&#
%
\begin{equation}
\label{def:geom_med} \hat\mu:=\med(\hat\mu_1,\ldots,\hat
\mu_k).
\end{equation}
Then
%
%e3.3 #&#
%
\begin{equation}
\label{strong_con} \Pr \bigl(\|\hat\mu-\mu\|> C_\alpha\eps \bigr)\leq\mathrm
{e}^{-k\psi
(\alpha;p)},
\end{equation}
where $C_\alpha$ is a constant defined in Lemma~\ref{lem:median} above.
\end{theorem}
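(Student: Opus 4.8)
The plan is to reduce the deviation bound for the geometric median to a binomial tail estimate via Lemma \ref{lem:median}. The key observation is the contrapositive of Lemma \ref{lem:median}(a): if $\|\hat\mu - \mu\| > C_\alpha \eps$, then (applying the lemma with $z = \mu$, $r = \eps$, and $x_\ast = \hat\mu$) there must exist a subset $J \subseteq \{1,\ldots,k\}$ with $|J| > \alpha k$ such that $\|\hat\mu_j - \mu\| > \eps$ for all $j \in J$. Hence the event $\{\|\hat\mu - \mu\| > C_\alpha\eps\}$ is contained in the event that more than $\alpha k$ of the individual estimators fail to concentrate, i.e. $\{\sum_{j=1}^k W_j > \alpha k\}$, where $W_j := I\{\|\hat\mu_j - \mu\| > \eps\}$.

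Next I would exploit independence. By hypothesis (\ref{weak_con}), the $W_j$ are independent Bernoulli random variables with $\mb E W_j = \Pr(\|\hat\mu_j - \mu\| > \eps) \leq p$. I would dominate $\sum_j W_j$ stochastically by a $\text{Binomial}(k,p)$ random variable, so that
\[
\Pr\Big(\|\hat\mu - \mu\| > C_\alpha\eps\Big) \leq \Pr\Big(\sum_{j=1}^k W_j > \alpha k\Big) \leq \Pr\big(\text{Bin}(k,p) \geq \lceil \alpha k\rceil\big).
\]
Then I would apply the standard Chernoff bound for the upper tail of a binomial: for $\alpha > p$,
\[
\Pr\big(\text{Bin}(k,p) \geq \alpha k\big) \leq \exp\Big(-k\Big[(1-\alpha)\log\tfrac{1-\alpha}{1-p} + \alpha\log\tfrac{\alpha}{p}\Big]\Big) = e^{-k\psi(\alpha;p)},
\]
which is exactly the claimed bound. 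This Chernoff inequality is the relative-entropy form of the binomial tail bound and can be cited or derived in one line from the exponential Markov inequality $\Pr(S \geq \alpha k) \leq \inf_{\lambda>0} e^{-\lambda\alpha k}(1 - p + pe^\lambda)^k$ by optimizing $\lambda$.

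I expect the main subtlety to be bookkeeping around the strict-versus-non-strict inequalities and the floor/ceiling: Lemma \ref{lem:median} gives $|J| > \alpha k$ strictly, and one must check that this correctly translates into $\sum_j W_j > \alpha k$ and hence into $\Pr(\text{Bin}(k,p) \geq \alpha k)$ without losing constants, but this is routine. A secondary point worth a sentence is verifying the hypotheses of Lemma \ref{lem:median}(a) apply: we need $\alpha \in (0,1/2)$ (given) and $z = \mu \in \mb H$, which holds since we are in the Hilbert space setting for part (a); the general Banach space case follows identically using part (b) and the corresponding $C_\alpha$. No part of the argument requires any further assumption on the distribution of the $\hat\mu_j$ beyond independence and the weak concentration bound (\ref{weak_con}), which is the whole point.
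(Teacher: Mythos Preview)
Your proposal is correct and matches the paper's proof essentially line for line: apply Lemma~\ref{lem:median} with $z=\mu$ to obtain the inclusion $\{\|\hat\mu-\mu\|>C_\alpha\eps\}\subseteq\{\sum_j I\{\|\hat\mu_j-\mu\|>\eps\}>\alpha k\}$, stochastically dominate by a $\mathrm{Bin}(k,p)$ variable, and conclude with the relative-entropy Chernoff bound. The paper cites Lemma~23 of \cite{lerasle2011robust} for the stochastic domination step and Proposition~A.6.1 of \cite{Vaart1996Weak-convergenc00} for the Chernoff bound, but otherwise the argument is identical to yours.
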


\begin{pf}
Assume that event $\m E:=  \{\|\hat\mu-\mu\|>C_\alpha\eps
  \}$ occurs.
Lemma~\ref{lem:median} implies that there exists a subset $J\subseteq
\{1,\ldots,k\}$ of cardinality $|J|\geq\alpha k$ such that
$\|\mu_j-\mu\|>\eps$ for all $j\in J$, hence
\[
\Pr(\m E)\leq\Pr \Biggl(\sum_{j=1}^k I \bigl \{
\|\hat\mu_j-\mu\| >\eps \bigr \}>\alpha k \Biggr).
\]
If $W$ has Binomial distribution $W\sim B(k,p)$, then
\[
\Pr \Biggl(\sum_{j=1}^k I \bigl \{\|\hat
\mu_j-\mu\|>\eps \bigr \} >\alpha k \Biggr)\leq\Pr (W>\alpha k )
\]
(see Lemma~23 in \cite{lerasle2011robust} for a rigorous proof of this fact).
Chernoff bound (e.g., Proposition A.6.1 in \cite
{Vaart1996Weak-convergenc00}) implies that
\[
\Pr (W>\alpha k )\leq\exp \bigl(-k\psi (\alpha ;p ) \bigr).
\]
\upqed
\end{pf}

%re3.1 #&#
%
\begin{remark}
\begin{enumerate}[(b)]
\item[(a)]If (\ref{weak_con}) is replaced by a weaker condition
assuming that
\[
\Pr \bigl(\|\hat\mu_j-\mu\|>\eps \bigr)\leq p<\alpha
\]
is satisfied only for $\hat\mu_j$, $j\in J\subset\{1,\ldots,k\}$,
where $|J|=(1-\tau)k$ for $0\leq\tau<\frac{\alpha-p}{1-p}$, then
the previous argument implies
\[
\Pr \bigl(\|\hat\mu-\mu\|> C_\alpha\eps \bigr)\leq\exp \biggl(-k(1-\tau)\psi
\biggl(\frac{\alpha-\tau}{1-\tau},p \biggr) \biggr).
\]
In particular, this version is useful in addressing the situation when
the sample contains a subset of cardinality at most $\tau k$ consisting
of ``outliers'' of arbitrary nature.

\item[(b)]
It is also clear that results of Theorem~\ref{th:main} can be used to
positively answer question (3) posed in the \hyperref[sec:intro]{Introduction}.
Indeed, if several autonomous computational resources (e.g.,
processors) are available, one can evaluate estimators $\hat\mu_j$,
$j=1,\ldots,k$ in parallel and combine them via the geometric median as
a final step. In many situations, the improvement in computational cost
will be significant.
\end{enumerate}
\end{remark}

Note that it is often easy to obtain an estimator satisfying
(\ref{weak_con}) with the correct rate $\eps$ under minimal
assumptions on
the underlying distribution.
In particular, if $\mu$ is the mean and $\hat\mu_k$ is the sample
mean, then (\ref{weak_con}) can be deduced from Chebyshev's
inequality, see Section~\ref{sec:mean} below for more details.

Next, we describe the method proposed in
\cite{Nemirovski1983Problem-complex00} which is based on a different notion
of the median.
Let $\hat\mu_1,\ldots,\hat\mu_k$ be a collection of independent
estimators of $\mu$ and assume that $\eps>0$ is chosen to satisfy
%
%e3.4 #&#
%
\begin{equation}
\label{eq:nemir2} \Pr \bigl(\|\hat\mu_j-\mu\|>\eps \bigr)\leq p<
\tfrac
{1}{2},\qquad1\leq j\leq k.
\end{equation}
Define $\tilde\mu:=\hat\mu_{j_\ast}$, where
%
%e3.5 #&#
%
\begin{eqnarray}
\label{eq:nemir0} &&j_\ast=j_\ast(\eps):=\min \biggl\{ j\in\{1,
\ldots, k\}: \exists I\subset \{1,\ldots, k\} %\\
%&&\phantom{j_\ast=j_\ast(\eps):=\min\l\{}
\nonumber
\\[-8pt]
\\[-8pt]
&&\phantom{j_\ast=j_\ast(\eps):=\min \biggl\{}
\mbox{such
that } |I |>\frac{k} 2 \mbox{ and } \forall i\in I, \|\hat
\mu_i-\hat\mu_j\|\leq2\eps \biggr\},\nonumber
\end{eqnarray}
and $j_\ast=1$ if none of $\hat\mu_j$'s satisfy the condition in braces.
It is not hard to show that
%
%e3.6 #&#
%
\begin{equation}
\label{eq:nemir} \Pr \bigl(\|\tilde\mu-\mu\|>3\eps \bigr)\leq\mathrm {e}^{-k\psi(1/2;p)},
\end{equation}
which is similar to (\ref{strong_con}).

However, it is important to note that $\tilde\mu$ defined by
(\ref{eq:nemir0}) explicitly depends on $\eps$ which is often unknown
in practice,
while the ``geometric median'' estimator $\hat\mu$
(\ref{def:geom_med}) does not require any additional information.
%
%re3.2 #&#

\begin{remark}
It is possible to modify $\tilde\mu$ by choosing $\eps_\ast$ to be
the smallest $\eps>0$ for which (\ref{eq:nemir0}) defines a non-empty
set, and setting
$j_\ast:=j_\ast(\eps_\ast)$.
The resulting construction does not assume that $\eps$ satisfying
condition (\ref{eq:nemir2}) is known a priori, while (\ref{eq:nemir})
remains valid. See \cite{Hsu2013Loss-minimizati00} for discussion and
applications of this method.
\end{remark}

It is important to mention the fact that (\ref{eq:nemir}) and the
inequality (\ref{strong_con}) of Theorem~\ref{th:main} have different
constants in front of $\eps$: it is equal to $C_\alpha$ in
(\ref{strong_con}) and to $3$ in (\ref{eq:nemir}).
Note that in the Hilbert space case, $C_\alpha=(1-\alpha)\sqrt{\frac
{1}{1-2\alpha}}\to1$
as $\alpha\to0$, while for general Banach spaces $C_\alpha=\frac
{2(1-\alpha)}{1-2\alpha}\to2$ as $\alpha\to0$.
In particular, $C_\alpha<3$ for all sufficiently small $\alpha$
(e.g., for $\alpha<-8+6\sqrt{2}\approx0.485$ in Hilbert space framework).
This difference becomes substantial when $\eps$ is of the form
\[
\eps=\mbox{approximation error}+\mbox{random error},
\]
where the first term in the sum is a constant and the second term
decreases with the growth of the sample size.
This is a typical situation when the model is misspecified,
see Section~\ref{sec:low_rank} below for a concrete example related to
matrix regression.
Our method allows to keep the constant in front of the approximation
error term arbitrary close to $1$ (and often leads to noticeably better
constants in general).

%s4 #&#
\section{Examples}
\label{sec:examples}

In this section, we discuss applications of Theorem~\ref{th:main} to
several classical problems, namely, estimation of the mean, principal
component analysis, sparse linear regression and low-rank matrix recovery.

Our priority was simplicity and clarity of exposition of the main ideas
which could affect optimality of some constants and generality of
obtained results.

%s4.1 #&#
\subsection{Estimation of the mean in a Hilbert space}
\label{sec:mean}

Assume that $\mb H$ is a separable Hilbert space with norm $\|\cdot\|$.
Let $X,X_1,\ldots,X_n\in\mb H$, $n\geq2$, be an i.i.d. sample from a
distribution $\Pi$ such that
$\mb EX=\mu$, $\mb E  [(X-\mu) \otimes(X-\mu)  ]=\Sigma
$ is the
covariance operator, and
$\mb E\|X-\mu\|^2=\tr(\Sigma)<\infty$.
We will apply result of Theorem~\ref{th:main} to construct a ``robust
estimator'' of $\mu$.
Let us point out that a simple alternative estimator of $\mu$ can be
obtained by applying the univariate ``median of the means''
construction (explained in Section~\ref{sec:intro}) coordinatewise.
When $\dim(\mb H)<\infty$, this method leads to dimension-dependent
bounds that can be even better than the result for the geometric
median-based approach (when $\dim(\mb H)$ is small).
However, when $\dim(\mb H)$ is large or infinite, dimension-dependent
estimates become uninformative; see Remark~\ref{rmk:comparison} below
for more details.

Set $\alpha_\ast:=\frac{7}{18}$ and $p_\ast:=0.1$ (these numerical
values allow to optimize the constants in Corollary~\ref{th:mean} below).
Let $0<\delta<1$ be the confidence parameter, and set
\[
k=: \biggl\lfloor\frac{\log (\fraca{1}{\delta} )}{\psi
(\alpha_\ast;p_\ast)} \biggr\rfloor+1 \leq \biggl\lfloor3.5\log
\biggl(\frac{1}{\delta} \biggr) \biggr\rfloor+1
\]
(we will assume that $\delta$ is such that $k\leq\frac{n}{2}$).
Divide the sample $X_1,\ldots, X_n$ into $k$ disjoint groups
$G_1,\ldots, G_k$ of size $ \lfloor\frac{n}{k} \rfloor$
each, and define
%
%e4.1 #&#
%
\begin{eqnarray}
\label{eq:median_mean} \hat\mu_j&:=&\frac{1}{|G_j|}\sum
_{i\in G_j}X_i,\qquad j=1,\ldots, k,
\nonumber
\\[-8pt]
\\[-8pt]
\hat\mu&:=&\med(\hat\mu_1,\ldots,\hat\mu_k).
\nonumber
\end{eqnarray}

%co4.1 #&#
%
\begin{corollary}\label{th:mean}
Under the aforementioned assumptions,
%
%e4.2 #&#
%
\begin{equation}
\label{eq:deviation1} \Pr \biggl(\|\hat\mu-\mu\|\geq11\sqrt{\frac{\tr(\Sigma)\log
(1.4/\delta)}{n}} \biggr)
\leq\delta.
\end{equation}
\end{corollary}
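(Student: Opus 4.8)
The plan is to apply Theorem~\ref{th:main} with the parameter values $\alpha_\ast = \frac{7}{18}$ and $p_\ast = 0.1$ chosen in the text, so the real work is to verify the hypothesis~(\ref{weak_con}) for each block estimator $\hat\mu_j$ and then to unwind the resulting constants. First I would fix $j$ and control the block mean $\hat\mu_j = \frac{1}{|G_j|}\sum_{i\in G_j} X_i$. Since the $X_i$ are i.i.d. with mean $\mu$ and $\mb E\|X-\mu\|^2 = \tr(\Sigma)$, and the summands are independent Hilbert-space-valued, we get $\mb E\|\hat\mu_j - \mu\|^2 = \frac{\tr(\Sigma)}{|G_j|}$. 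Then Chebyshev's (Markov's) inequality gives, for any $\eps > 0$,
\[
\Pr\big(\|\hat\mu_j - \mu\| > \eps\big) \leq \frac{\tr(\Sigma)}{|G_j|\,\eps^2}.
\]
Choosing $\eps = \eps_0 := \sqrt{\tr(\Sigma)/(p_\ast |G_j|)}$ makes the right-hand side equal to $p_\ast = 0.1 < \alpha_\ast$, so~(\ref{weak_con}) holds with $p = p_\ast$ and this $\eps_0$.

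Next I would feed this into Theorem~\ref{th:main}: with $\hat\mu = \med(\hat\mu_1,\ldots,\hat\mu_k)$ we obtain
\[
\Pr\big(\|\hat\mu - \mu\| > C_{\alpha_\ast}\,\eps_0\big) \leq e^{-k\psi(\alpha_\ast; p_\ast)}.
\]
By the choice $k = \lfloor \log(1/\delta)/\psi(\alpha_\ast;p_\ast)\rfloor + 1 \geq \log(1/\delta)/\psi(\alpha_\ast;p_\ast)$, the exponential tail is at most $e^{-\log(1/\delta)} = \delta$, which settles the probability bound. It then remains to bound $C_{\alpha_\ast}\eps_0$ by $11\sqrt{\tr(\Sigma)\log(1.4/\delta)/n}$. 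This is the constant-chasing part: with $\alpha_\ast = 7/18$ one has $1 - 2\alpha_\ast = 2/18 = 1/9$, so $C_{\alpha_\ast} = (1-\alpha_\ast)\sqrt{1/(1-2\alpha_\ast)} = \frac{11}{18}\cdot 3 = \frac{11}{6}$. The block size satisfies $|G_j| = \lfloor n/k\rfloor \geq n/k - 1 \geq n/(2k)$ (using $k \leq n/2$), so $\eps_0 = \sqrt{\tr(\Sigma)/(p_\ast|G_j|)} \leq \sqrt{2k\tr(\Sigma)/(p_\ast n)}$. Combining, $C_{\alpha_\ast}\eps_0 \leq \frac{11}{6}\sqrt{2k\tr(\Sigma)/(0.1\, n)} = \frac{11}{6}\sqrt{20\, k\,\tr(\Sigma)/n}$. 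Finally I would insert $k \leq 3.5\log(1/\delta) + 1$ and check numerically that $\frac{11}{6}\sqrt{20(3.5\log(1/\delta)+1)} \leq 11\sqrt{\log(1.4/\delta)}$; this holds because $\psi(7/18;0.1) \geq 1/3.5$ gives the clean bound on $k$, and the extra $\log 1.4$ in the target absorbs the additive $+1$ in $k$ together with the slack between $20 \cdot 3.5$ and the squared leading constant.

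The main obstacle is purely the constant optimization: one must verify that the specific numerical choices $\alpha_\ast = 7/18$, $p_\ast = 0.1$ make $C_{\alpha_\ast}$ and $\psi(\alpha_\ast;p_\ast)$ simultaneously good enough that the final constant collapses to $11$ and the residual logarithmic factor to $\log(1.4/\delta)$. In particular one needs $\psi(7/18;0.1) \geq 1/3.5 = 2/7 \approx 0.2857$, which should be checked from the definition $\psi(\alpha;p) = (1-\alpha)\log\frac{1-\alpha}{1-p} + \alpha\log\frac{\alpha}{p}$, and then a short monotone estimate to handle the floor and the $+1$ in the definition of $k$. Everything else is routine: the independence and second-moment computation for the block means, Chebyshev, and a direct substitution into Theorem~\ref{th:main}. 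I would present the Chebyshev step and the invocation of Theorem~\ref{th:main} in full, and relegate the numerical inequalities $C_{\alpha_\ast} = 11/6$, $\psi(\alpha_\ast;p_\ast) \geq 2/7$ to a brief verification.
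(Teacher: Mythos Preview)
Your approach is exactly the paper's: Chebyshev on each block mean to get~(\ref{weak_con}) with $p=p_\ast$, then Theorem~\ref{th:main}, then numerics. However, there is an arithmetic slip that actually breaks your final step. You write $1-2\alpha_\ast = 2/18 = 1/9$, but $2\alpha_\ast = 14/18$, so $1-2\alpha_\ast = 4/18 = 2/9$. Hence
\[
C_{\alpha_\ast} = \frac{11}{18}\sqrt{\frac{9}{2}} = \frac{11}{6\sqrt{2}} \approx 1.30,
\]
not $11/6$. With your value $11/6$, the inequality you propose to ``check numerically'' is in fact false: it reduces to $70\log(1/\delta)+20 \leq 36\log(1.4/\delta)$, which fails for every $\delta<1$.

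With the correct $C_{\alpha_\ast}=11/(6\sqrt{2})$ everything closes. The paper packages the constants slightly more cleanly than your route through $k\leq 3.5\log(1/\delta)+1$: it observes directly that $C_{\alpha_\ast}\sqrt{2/(p_\ast\,\psi(\alpha_\ast;p_\ast))}\leq 11$ and that $k\,\psi(\alpha_\ast;p_\ast)\leq \log(1/\delta)+\psi(\alpha_\ast;p_\ast)\leq \log(1.4/\delta)$ (using $\psi(7/18;0.1)\approx 0.291<\log 1.4$), which avoids the extra slack from the bound $1/\psi\leq 3.5$. Either bookkeeping works once $C_{\alpha_\ast}$ is corrected.
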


\begin{pf}
We will apply Theorem~\ref{th:main} to the independent estimators
$\hat\mu_1,\ldots,\hat\mu_k$.

To this end, we need to find $\eps$ satisfying (\ref{weak_con}).
Since for all $1\leq j\leq k\leq\frac{n} 2$
\[
\mb E\|\hat\mu_j-\mu\|^2\leq\frac{\mb E\|X-\mu\|^2}{|G_j|}\leq
\frac{2k}{n}\tr(\Sigma),
\]
Chebyshev's inequality gives
\[
\Pr \bigl(\|\hat\mu_j-\mu\|\geq\eps \bigr)\leq\frac
{2k}{n\eps^2}\tr (\Sigma),
\]
which is further bounded by $p_\ast$ whenever $\eps^2\geq\frac
{2k}{p_\ast n}\tr(\Sigma)$.
The claim now follows from Theorem~\ref{th:main} and the bounds
$C_{\alpha_\ast}\sqrt{\frac{2}{p_\ast\psi(\alpha_\ast;p_\ast
)}}\leq11$ and $\log(1/\delta)+\psi(\alpha_\ast;p_\ast)\leq\log
 (\frac{1.4}{\delta} )$.
\end{pf}

%re4.1 #&#
%
\begin{remark}
\label{rmk:comparison}
\begin{enumerate}[(b)]
\item[(a)]It is easy to see that the proof of Corollary~\ref{th:mean} actually
yields a better bound
%
%e4.3 #&#
%
\begin{equation}
\label{eq:geom_opt} \Pr \biggl(\|\hat\mu-\mu\|\geq\frac{C_{\alpha_\ast}}{\sqrt
{p_\ast\psi(\alpha_\ast;p_\ast)}}\sqrt{\tr(\Sigma)
\frac{\log(1.4/\delta)}{n-3.5\log(1.4/\delta)}} \biggr)\leq\delta,
\end{equation}
with $\frac{C_{\alpha_\ast}}{\sqrt{p_\ast\psi(\alpha_\ast
;p_\ast)}}\leq7.6$.

\item[(b)]For the estimator $\tilde\mu$ defined in (\ref
{eq:nemir0}), it
follows from (\ref{eq:nemir}) (with $p=0.12$) that
\[
\Pr \biggl(\|\tilde\mu-\mu\|\geq13.2\sqrt{\tr(\Sigma)\frac{\log
(1.6/\delta)}{n-2.4\log(1.6/\delta)}} \biggr)
\leq\delta,
\]
which yields a noticeably larger constant ($13.2$ versus $7.6$).

\item[(c)]When $\mb H$ is a $D$-dimensional Euclidean space, it is
interesting to
compare $\hat\mu$ with another natural estimator $\hat\mu_{\ast}$
obtained by taking the median \emph{coordinate-wise}, that is, if
$\hat\mu_j=(\hat\mu_j^{(1)},\ldots,\hat\mu_j^{(D)})$,
$j=1,\ldots,
k$, then
\[
\hat\mu_\ast:= \bigl(\med\bigl(\hat\mu_1^{(1)},
\ldots,\hat\mu _k^{(1)}\bigr),\ldots,\med\bigl(\hat
\mu_1^{(D)},\ldots,\hat\mu _k^{(D)}
\bigr) \bigr).
\]
It is easy to see that for the univariate median, inequality
(\ref{strong_con}) holds with $\alpha=1/2$ and $C_\alpha=1$, hence the
union bound over $i=1,\ldots, D$ implies that
%
%e4.4 #&#
%
\begin{equation}
\label{eq:univar} \Pr \biggl(\|\hat\mu_\ast-\mu\|\geq4.4\sqrt{\tr(\Sigma)
\frac
{\log(1.6D/\delta)}{n-2.4\log(1.6D/\delta)}} \biggr) \leq \delta
\end{equation}
(here, $p$ was set to be $0.12$).
This bound should be compared to (\ref{eq:geom_opt}) -- the latter
becomes better only when $D$ is sufficiently large (e.g., $D\geq165$ for
$\delta=0.1$ and $D\geq15\,806$ for $\delta=0.01$).\footnote{We
want to thank the anonymous reviewer for pointing this out.}
Note that the constant in (\ref{eq:univar}) can be further improved in
a situation when tight upper bounds on the true variances or kurtoses
of coordinates of $X$ are known by using a univariate estimator of
\cite{catoni2012challenging} to construct $\hat\mu_\ast$.
\end{enumerate}
\end{remark}

Our estimation technique naturally extends to the problem of
constructing the confidence sets for the mean.
Indeed, when faced with the task of obtaining the non-asymptotic
confidence interval, one usually fixes the desired coverage probability
in advance, which is exactly how we build our estimator.
To obtain a parameter-free confidence ball from (\ref{eq:deviation1}),
one has to estimate $\tr(\Sigma)$.
To this end, we will apply Theorem~\ref{th:main} to a collection of
independent statistics $\hat T_1,\ldots, \hat T_k$, where
\[
\hat T_j=\frac{1}{|G_j|}\sum_{i\in G_j}
\|X_i-\hat\mu_j\|^2,\qquad j=1,\ldots, k,
\]
and $\hat\mu_j$ are the sample means defined in (\ref{eq:median_mean}).
Let $\hat T:=\med(\hat T_1,\ldots,\hat T_k)$ (if $k$ is even, the
median is not unique, so we pick an arbitrary representative).
%
%pr4.1 #&#

\begin{proposition}
Assume that
%
%e4.5 #&#
%
\begin{equation}
\label{eq:var} 15.2\sqrt{\frac{\mb E\|X-\mu\|^4-  (\tr(\Sigma)
)^2}{  (\tr
(\Sigma)  )^2}}\leq \biggl(\frac{1}{2} -178
\frac{\log
(1.4/\delta
)}{n} \biggr)\sqrt{\frac{n}{\log(1.4/\delta)}}.
\end{equation}
Then
%
%e4.6 #&#
%
\begin{equation}
\label{eq:variance} \Pr \bigl(\tr(\Sigma)\leq2\hat T \bigr)\geq1-\delta.
\end{equation}
\end{proposition}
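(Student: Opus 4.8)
The plan is to apply Theorem~\ref{th:main} to the real-valued statistics $\hat T_1,\dots,\hat T_k$, regarding $\mathbb{R}$ as a one-dimensional Hilbert space and keeping the same parameters $\alpha_\ast=7/18$, $p_\ast=0.1$, the same number of blocks $k$, and the same partition $G_1,\dots,G_k$ as in the construction of Corollary~\ref{th:mean}. To invoke the theorem one needs a weak deviation bound of the form (\ref{weak_con}) for each $\hat T_j$ around a common deterministic target. Since the hypothesis (\ref{eq:var}) involves only the second and fourth moments of $\|X-\mu\|$, the natural device is Chebyshev's inequality, so the first step is to compute $\mathbb{E}\hat T_j$ and a usable upper bound for $\var(\hat T_j)$.

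To this end I would write $m:=|G_j|=\lfloor n/k\rfloor$ and use the elementary identity $\hat T_j=A_j-B_j$, where $A_j=\frac1m\sum_{i\in G_j}\|X_i-\mu\|^2$ and $B_j=\|\hat\mu_j-\mu\|^2$; this already gives $\mathbb{E}\hat T_j=\big(1-\tfrac1m\big)\tr(\Sigma)$. Bounding $\sqrt{\var(\hat T_j)}\le\sqrt{\var(A_j)}+\sqrt{\var(B_j)}$ by the triangle inequality in $L^2$, one has $\var(A_j)=\frac1m\big(\mathbb{E}\|X-\mu\|^4-(\tr\Sigma)^2\big)$, while expanding $\mathbb{E}\big\|\sum_{i\in G_j}(X_i-\mu)\big\|^4$ and using $\tr(\Sigma^2)\le(\tr\Sigma)^2$ shows $\var(B_j)\le\frac{c_1}{m^3}\big(\mathbb{E}\|X-\mu\|^4-(\tr\Sigma)^2\big)+\frac{c_2}{m^2}(\tr\Sigma)^2$ for absolute constants $c_1,c_2$. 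Chebyshev's inequality then yields (\ref{weak_con}) for $\hat T_j$ with $p=p_\ast$ and $\eps=\sqrt{\var(\hat T_j)/p_\ast}$, and this $\eps$ is at most a constant multiple of $\frac1{\sqrt m}\sqrt{\mathbb{E}\|X-\mu\|^4-(\tr\Sigma)^2}+\frac1m\tr(\Sigma)$.

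The remaining step is to feed this into Theorem~\ref{th:main} and translate the resulting two-sided bound into the one-sided claim. Theorem~\ref{th:main} gives $\Pr\big(|\hat T-(1-\tfrac1m)\tr\Sigma|>C_{\alpha_\ast}\eps\big)\le e^{-k\psi(\alpha_\ast;p_\ast)}\le\delta$, since $k\psi(\alpha_\ast;p_\ast)\ge\log(1/\delta)$. On the complementary event one has $\hat T\ge(1-\tfrac1m)\tr(\Sigma)-C_{\alpha_\ast}\eps$, which is at least $\tfrac12\tr(\Sigma)$ precisely when $C_{\alpha_\ast}\eps\le(\tfrac12-\tfrac1m)\tr(\Sigma)$. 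Dividing by $\tr(\Sigma)$, inserting the bound on $\eps$, and using $m=\lfloor n/k\rfloor\ge n/(2k)$ together with $k\le 3.5\log(1.4/\delta)$ (which follows from $1/\psi(\alpha_\ast;p_\ast)\le 3.5$ and $\log(1/\delta)+\psi(\alpha_\ast;p_\ast)\le\log(1.4/\delta)$), this inequality reduces to exactly the hypothesis (\ref{eq:var}): the $1/\sqrt m$ term accounts for the coefficient $15.2$ in front of the standardized fourth-moment quantity, and every $1/m$-order contribution — the bias $\tfrac1m\tr\Sigma$ and the $\tfrac1m\tr\Sigma$ piece of $\sqrt{\var(B_j)}$ — is absorbed into the $178\,\log(1.4/\delta)/n$ correction. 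This gives $\Pr(\tr(\Sigma)\le 2\hat T)\ge 1-\delta$.

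The hard part will be the variance estimate for $\hat T_j$: because the terms $\|X_i-\hat\mu_j\|^2$ are not independent — they all involve $\hat\mu_j$ — one cannot simply divide a single-observation variance by $m$, and the quadratic form $B_j=\|\hat\mu_j-\mu\|^2$ must be controlled through a fourth-moment expansion, with the inequality $\tr(\Sigma^2)\le(\tr\Sigma)^2$ used to discard the spectral term that appears there. The bias $(1-1/m)\tr(\Sigma)$ of $\hat T_j$ is exactly what forces the conclusion to be only a factor-$2$ statement rather than a sharp one. Everything else is routine bookkeeping: tracking the floor functions and the various $1/m$-order terms so that the final numerical inequality collapses to the stated form of (\ref{eq:var}).
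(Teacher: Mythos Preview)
Your approach is correct and yields the claim, but it differs from the paper's argument in a way worth noting. Both proofs start from the decomposition $\hat T_j=A_j-B_j$ with $A_j=\frac1m\sum_{i\in G_j}\|X_i-\mu\|^2$ and $B_j=\|\hat\mu_j-\mu\|^2$, and both finish by invoking Theorem~\ref{th:main} with $\alpha_\ast,p_\ast$. The difference is in how the weak concentration (\ref{weak_con}) is obtained. You center $\hat T_j$ at its mean $(1-1/m)\tr(\Sigma)$, bound $\sqrt{\var(\hat T_j)}$ via the $L^2$ triangle inequality, and then need the fourth-moment expansion of $\|\sum(X_i-\mu)\|^4$ to control $\var(B_j)$; this is the computation you correctly flag as the ``hard part.'' The paper instead centers at $\tr(\Sigma)$ and treats $A_j$ and $B_j$ separately with a union bound at level $p_\ast/2$ each: Chebyshev on $A_j$ (fourth moment of $X-\mu$) and a plain Markov bound $\Pr(B_j\ge\eps_1)\le\mb E B_j/\eps_1$ on the nonnegative $B_j$ (only the second moment of $X-\mu$). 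This sidesteps the variance of $B_j$ entirely, so no fourth-moment expansion and no $\tr(\Sigma^2)\le(\tr\Sigma)^2$ step are needed, and there is no bias term to track. Your route is a bit more work but would in fact lead to slightly \emph{smaller} numerical constants than $15.2$ and $178$; since (\ref{eq:var}) with those constants is the hypothesis, your weaker sufficient condition is implied by it and the conclusion follows. The one imprecision is the word ``exactly'': your constants will not match $15.2$ and $178$ on the nose, but they do not have to.
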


\begin{pf}
Note that
\[
\hat T_j=\frac{1}{|G_j|}\sum_{i\in G_j}
\|X_i-\mu\|^2-\|\hat\mu _j-\mu
\|^2.
\]
Chebyshev's inequality gives (assuming that $k\leq n/2$)
\begin{eqnarray*}
&\Pr \biggl(\|\hat\mu_j-\mu\|^2\geq\underbrace{4
\displaystyle\frac{\tr(\Sigma
)\log(1.4/\delta)}{p_\ast\psi(\alpha_\ast;p_\ast) n}}_{\eps
_1} \biggr)\leq\displaystyle\frac{p_\ast}{2},&
\\
&\Pr \biggl(\biggl\llvert \displaystyle\frac{1}{|G_j|}\sum_{i\in G_j}
 \|X_i-\mu \|^2-\tr(\Sigma)
\biggr\rrvert \geq%\r.
%\\
%&
%\l.
\underbrace{2\sqrt{
\mb E\|X-\mu\|^4- \bigl(\tr(\Sigma) \bigr)^2}\sqrt {
\displaystyle\frac{\log(1.4/\delta)}{np_\ast\psi(\alpha_\ast;p_\ast
)}}}_{\eps_2} \biggr)\leq\displaystyle\frac{p_\ast}{2},&
\end{eqnarray*}
hence $\Pr  (|\hat T_j-\tr(\Sigma)|\geq\eps_1+\eps_2
)\leq
p_\ast$.
Theorem~\ref{th:main} implies that
\[
\Pr \bigl(\hat T\leq\tr(\Sigma)-C_{\alpha_\ast}(\eps_1+\eps
_2) \bigr)\leq\Pr \bigl(\bigl |\hat T-\tr(\Sigma)\bigr |\geq C_{\alpha_\ast}(
\eps _1+\eps _2) \bigr)\leq\delta.
\]
Since $\Pr  (\hat T\leq\frac{\tr(\Sigma)}{2}  )\leq\Pr
  (\hat
T\leq\tr(\Sigma)-C_{\alpha_\ast}(\eps_1+\eps_2)  )$ whenever
(\ref{eq:var}) is satisfied,
the result follows.
\end{pf}

Combining (\ref{eq:variance}) with Corollary~\ref{th:mean}, we
immediately get the following statement.

%co4.2 #&#
%
\begin{corollary}
\label{th:conf_ball}
Let $B(h,r)$ be the ball of radius $r$ centered at $h\in\mb H$.
Define the random radius
\[
r_n:=11\sqrt2\sqrt{\widehat T\frac{\log(1.4/\delta)}{n}}
\]
and let $\hat\mu$ be the estimator defined by (\ref{eq:median_mean}).
If (\ref{eq:var}) holds, then
\[
\Pr \bigl(B(\hat\mu,r_n)\mbox{ contains } \mu \bigr)\geq1-2\delta.
\]
\end{corollary}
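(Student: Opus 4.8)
The plan is to combine the two events already controlled in Corollary \ref{th:mean} and in the preceding Proposition by a union bound. First I would recall that Corollary \ref{th:mean} gives
\[
\Pr\Big(\|\hat\mu-\mu\|\geq 11\sqrt{\tfrac{\tr(\Sigma)\log(1.4/\delta)}{n}}\Big)\leq \delta,
\]
so with probability at least $1-\delta$ the true mean $\mu$ lies in the ball $B\big(\hat\mu,\,11\sqrt{\tr(\Sigma)\log(1.4/\delta)/n}\big)$. The trouble is that the radius of this ball depends on the unknown quantity $\tr(\Sigma)$, which is why we need the variance-estimation step.

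Second, I would invoke the Proposition: under assumption (\ref{eq:var}), $\Pr\big(\tr(\Sigma)\leq 2\hat T\big)\geq 1-\delta$. On this event, $\tr(\Sigma)\leq 2\hat T$, hence
\[
11\sqrt{\tfrac{\tr(\Sigma)\log(1.4/\delta)}{n}}\leq 11\sqrt{\tfrac{2\hat T\log(1.4/\delta)}{n}} = 11\sqrt{2}\,\sqrt{\hat T\,\tfrac{\log(1.4/\delta)}{n}} = r_n.
\]
Thus the (data-dependent) ball $B(\hat\mu,r_n)$ contains the deterministic-radius ball $B\big(\hat\mu,11\sqrt{\tr(\Sigma)\log(1.4/\delta)/n}\big)$.

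Third, I would intersect the two events. Let $\mathcal A=\{\|\hat\mu-\mu\|\leq 11\sqrt{\tr(\Sigma)\log(1.4/\delta)/n}\}$ and $\mathcal B=\{\tr(\Sigma)\leq 2\hat T\}$. By the union bound, $\Pr(\mathcal A\cap\mathcal B)\geq 1-\Pr(\mathcal A^c)-\Pr(\mathcal B^c)\geq 1-2\delta$. On $\mathcal A\cap\mathcal B$, the computation above shows $\|\hat\mu-\mu\|\leq r_n$, i.e. $\mu\in B(\hat\mu,r_n)$. This gives $\Pr\big(B(\hat\mu,r_n)\text{ contains }\mu\big)\geq 1-2\delta$, as claimed.

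The argument is entirely routine; there is no real obstacle, the only mild point being to make sure that the event in Corollary \ref{th:mean} and the event in the Proposition are both used at the \emph{same} confidence level $\delta$ (which they are, both being stated with $\log(1.4/\delta)$) so that the union bound yields exactly $1-2\delta$. One could equally state a version with two different confidence parameters and obtain $1-\delta_1-\delta_2$, but matching them keeps the statement clean.
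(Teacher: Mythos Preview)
Your proof is correct and follows exactly the approach indicated in the paper, which simply states that the corollary is obtained by ``combining (\ref{eq:variance}) with Corollary \ref{th:mean}''. Your argument spells out this combination via the union bound in precisely the intended way.
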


%s4.2 #&#
\subsection{Robust Principal Component Analysis}
\label{sec:pca}

It is well known that classical Principal Component Analysis (PCA)
\cite{Pearson1901On-lines-and-pl00} is very sensitive to the presence
of the outliers in a sample.
The literature on robust PCA suggests several computationally efficient
and theoretically sound methods to recover the linear structure in the data.
For instance, if part of the observations is contained in a
low-dimensional subspace while the rest are corrupted by noise, the
low-dimensional subspace can often be recovered exactly, see \cite
{candes2011robust,zhang2011novel} and references therein.

However, for the case when no additional geometric structure in the
data can be assumed, we suggest a simple and easy-to-implement
alternative which uses the geometric median to obtain a robust
estimator of the covariance matrix.
In this section, we study the simplest case when the geometric median
is combined with the sample covariance estimator.
However, it is possible to use various alternatives in place of the
sample covariance, such as the shrinkage estimator
\cite{ledoit2012nonlinear}, banding/tapering estimator \cite
{bickel2008regularized},
hard thresholding estimator \cite{bickel2008covariance} or the nuclear
norm-penalized
estimator \cite{lounici2012high}, to name a few.

Let $X,X_1,\ldots,X_n\in\mb R^D$ be i.i.d. random vectors such that
$\mb EX=\mu$, $\mb E  [(X-\mu)(X-\mu)^T  ]=\Sigma$ and
$\mb E\|X\|
^4<\infty$, where $\|\cdot\|$ is the usual Euclidean norm.
We are interested in estimating the covariance matrix $\Sigma$ and the
linear subspace generated by its eigenvectors associated to ``large''
eigenvalues.
For simplicity, suppose that all positive eigenvalues of $\Sigma$ have
algebraic multiplicity~1.
We will enumerate $\lambda_i:=\lambda_i(\Sigma)$ in the decreasing
order, so that $\lambda_1>\lambda_2\geq\cdots\geq0$.

Assume first that the data is centered (so that $\mu=0$).
As before, set $\alpha_\ast:=\frac{7}{18}$, $p_\ast:=0.1$, divide
the sample $X_1,\ldots,X_n$ into
$k= \lfloor\frac{\log (\fraca{1}{\delta} )}{\psi
(\alpha_\ast;p_\ast)} \rfloor+1$
disjoint groups $G_1,\ldots, G_k$ of size $ \lfloor\frac
{n}{k} \rfloor$ each,
and let
%
%e4.7 #&#
%
\begin{eqnarray}
\label{eq:median_cov} \hat\Sigma_j&:=&\frac{1}{|G_j|}\sum
_{i\in G_j}X_i X_i^T,\qquad
j=1,\ldots, k,
\nonumber
\\[-8pt]
\\[-8pt]
\hat\Sigma&:=&\med(\hat\Sigma_1,\ldots,\hat\Sigma_k),
\nonumber
\end{eqnarray}
where the median is taken with respect to Frobenius norm $\|A\|
_{{\mathrm
F}}:=\sqrt{\tr(A^T A)}$.

%re4.2 #&#
%
\begin{remark}
Note that $\hat\Sigma$ is positive semidefinite as a convex
combination of positive semidefinite matrices.
\end{remark}

Let $\proj_m$ be the orthogonal projector on a subspace corresponding
to the $m$ largest positive eigenvalues of $\Sigma$.
Let $\widehat{\proj_m}$ be the orthogonal projector of the same rank
as $\proj_m$ corresponding to the $m\leq \lfloor\frac{n}{k}
\rfloor$ largest eigenvalues of $\hat\Sigma$.
In this case, the following bound holds.
%
%co4.3 #&#

\begin{corollary}
Let $\Delta_m:=\lambda_m-\lambda_{m+1}$ and assume that
%
%e4.8 #&#
%
\begin{equation}
\label{eq:gap} \Delta_m > 44\sqrt{\frac{  (\mb E\|X\|^4-\tr(\Sigma^2)
)\log
(1.4/\delta)}{n}}.
\end{equation}
Then
\[
\Pr \biggl( \|\widehat{\proj_m}-\proj_m
\|_{{\mathrm F}}\geq \frac{22}{\Delta_m}\sqrt{\frac{  (\mb E\|X\|^4-\tr(\Sigma
^2)
)\log(1.4/\delta)}{n}} \biggr)\leq
\delta.
\]
\end{corollary}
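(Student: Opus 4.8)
The plan is to combine the "boosting the confidence" Theorem \ref{th:main}, applied to the Frobenius-norm-valued estimators $\hat\Sigma_j$, with a standard perturbation bound for spectral projectors (Davis--Kahan type). First I would establish the weak concentration hypothesis (\ref{weak_con}) for the block estimators $\hat\Sigma_j$ of $\Sigma$: since $\mb E\hat\Sigma_j=\Sigma$ (recall $\mu=0$) and the summands $X_iX_i^T$ are i.i.d.\ with $\mb E\|X_iX_i^T-\Sigma\|_{\mathrm F}^2=\mb E\|X\|^4-\tr(\Sigma^2)$, we get $\mb E\|\hat\Sigma_j-\Sigma\|_{\mathrm F}^2\leq \frac{k}{\lfloor n/k\rfloor}\bigl(\mb E\|X\|^4-\tr(\Sigma^2)\bigr)\leq \frac{2k}{n}\bigl(\mb E\|X\|^4-\tr(\Sigma^2)\bigr)$ for $k\leq n/2$. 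By Chebyshev's inequality this is $\leq p_\ast$ as soon as $\eps^2\geq \frac{2k}{p_\ast n}\bigl(\mb E\|X\|^4-\tr(\Sigma^2)\bigr)$, exactly as in Corollary \ref{th:mean}. Applying Theorem \ref{th:main} with $\alpha_\ast=\tfrac{7}{18}$, $p_\ast=0.1$ then yields
\begin{align*}
\Pr\Bigl(\|\hat\Sigma-\Sigma\|_{\mathrm F}> C_{\alpha_\ast}\sqrt{\tfrac{2}{p_\ast\psi(\alpha_\ast;p_\ast)}}\sqrt{\tfrac{(\mb E\|X\|^4-\tr(\Sigma^2))\log(1.4/\delta)}{n}}\Bigr)\leq\delta,
\end{align*}
and numerically $C_{\alpha_\ast}\sqrt{2/(p_\ast\psi(\alpha_\ast;p_\ast))}\leq 11$ as recorded in the proof of Corollary \ref{th:mean}.

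Next I would pass from the covariance error to the projector error. The eigenvalue gap $\Delta_m=\lambda_m-\lambda_{m+1}>0$ separates the top-$m$ eigenspace of $\Sigma$ from the rest; by the Davis--Kahan $\sin\Theta$ theorem (or the spectral-projector perturbation bound in the form $\|\widehat{\proj_m}-\proj_m\|_{\mathrm F}\leq \frac{2}{\Delta_m}\|\hat\Sigma-\Sigma\|_{\mathrm F}$, which holds once $\|\hat\Sigma-\Sigma\|_{\mathrm F}<\Delta_m/2$ so that the rank of the top-$m$ eigenspace of $\hat\Sigma$ is well-defined and equals $m$), we have
\begin{align*}
\|\widehat{\proj_m}-\proj_m\|_{\mathrm F}\leq \frac{2}{\Delta_m}\|\hat\Sigma-\Sigma\|_{\mathrm F}.
\end{align*}
Condition (\ref{eq:gap}), which says $\Delta_m>44\sqrt{(\mb E\|X\|^4-\tr(\Sigma^2))\log(1.4/\delta)/n}$, is exactly what guarantees (on the high-probability event above) that $\|\hat\Sigma-\Sigma\|_{\mathrm F}\leq 11\sqrt{\cdots}<\Delta_m/2$, so the perturbation bound applies. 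Multiplying the deviation bound for $\|\hat\Sigma-\Sigma\|_{\mathrm F}$ by $2/\Delta_m$ then produces the stated constant $22/\Delta_m$, completing the argument.

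The main obstacle is the perturbation step: one must be careful that $\widehat{\proj_m}$, the projector onto the top-$m$ eigenvalues of $\hat\Sigma$, genuinely has the same rank as $\proj_m$ and that there is no eigenvalue crossing. This is precisely where the quantitative gap assumption (\ref{eq:gap}) is used — it forces the empirical eigenvalues to stay on the correct sides of the gap via Weyl's inequality ($|\lambda_i(\hat\Sigma)-\lambda_i(\Sigma)|\leq\|\hat\Sigma-\Sigma\|_{\mathrm{op}}\leq\|\hat\Sigma-\Sigma\|_{\mathrm F}$), after which the Davis--Kahan bound with the cited constant $2$ is standard. A secondary (purely arithmetic) point is verifying the numerical inequalities $C_{\alpha_\ast}\sqrt{2/(p_\ast\psi(\alpha_\ast;p_\ast))}\leq 11$ and $\log(1/\delta)+\psi(\alpha_\ast;p_\ast)\leq\log(1.4/\delta)$, but these were already established in the proof of Corollary \ref{th:mean} and can simply be invoked.
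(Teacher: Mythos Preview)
Your proposal is correct and follows essentially the same route as the paper: apply Corollary~\ref{th:mean} to the i.i.d.\ matrices $Y_j=X_jX_j^T$ (you reproduce its proof inline, which is fine) to get $\Pr(\|\hat\Sigma-\Sigma\|_{\rm F}\geq 11\sqrt{\cdots})\leq\delta$, then invoke Davis--Kahan to convert this into a projector bound with the factor $2/\Delta_m$. The only cosmetic difference is that the paper cites the Zwald form of Davis--Kahan requiring $\|\hat\Sigma-\Sigma\|_{\rm F}<\Delta_m/4$ rather than your $\Delta_m/2$, but since (\ref{eq:gap}) gives $11\sqrt{\cdots}<\Delta_m/4$ both thresholds are met and the conclusions coincide.
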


\begin{pf}
It follows from Davis--Kahan perturbation theorem \cite
{davis1970rotation} (see also Theorem~3 in \cite
{Zwald2006On-the-Converge00}) that, whenever $\|\hat\Sigma-\Sigma\|
_{{\mathrm F}}<\frac{1} 4 \Delta_m$,
%
%e4.9 #&#
%
\begin{equation}
\label{eq:proj} \|\widehat{\proj_m}-\proj_m
\|_{{\mathrm F}}\leq\frac{2\|
\hat
\Sigma-\Sigma\|_{{\mathrm F}}}{\Delta_m}.
\end{equation}
Define $Y_j:=X_jX_j^T$, $j=1,\ldots, n$ and note that $\mb E\|Y-\mb
EY\|
^2_{{\mathrm F}}=\mb E\|X\|^4-\tr(\Sigma^2)$.
Applying Corollary~\ref{th:mean} to $Y_j$, $j=1,\ldots,n$, we get
\[
\Pr \biggl(\|\hat\Sigma-\Sigma\|_{{\mathrm F}}\geq11\sqrt{\frac
{  (\mb
E\|X\|^4-\tr(\Sigma^2)  )\log(1.4/\delta)}{n}}
\biggr)\leq \delta.
\]
Whenever (\ref{eq:gap}) is satisfied, inequality $11\sqrt{\frac
{
(\mb E\|X\|^4-\tr(\Sigma^2)  )\log(1.4/\delta)}{n}} < \frac
{\Delta_m}{4}$ holds, and (\ref{eq:proj}) yields the result.
\end{pf}

Similar bounds can be obtained in a more general situation when $X$ is
not necessarily centered.
To this end, let
%
%e4.10 #&#
%
\begin{eqnarray}
\label{eq:geom_cov} \hat\mu_j&:=&\frac{1}{|G_j|}\sum
_{i\in G_j}X_i,\qquad j=1,\ldots, k,
\nonumber
\\
\hat\Sigma_j&:=&\frac{1}{|G_j|}\sum
_{i\in G_j}(X_i-\hat\mu _j)
(X_i-\hat\mu_j)^T,\qquad j=1,\ldots, k,
\\
\hat\Sigma&:=&\med(\hat\Sigma_1,\ldots,\hat\Sigma_k).
\nonumber
\end{eqnarray}
Note that $\hat\Sigma_1,\ldots,\hat\Sigma_k$ are independent.
Then, using the fact that for any $1\leq j\leq k$
\[
\hat\Sigma_j=\frac{1}{|G_j|}\sum_{i\in G_j}(X_i-
\mu) (X_i-\mu)^T- (\mu-\hat\mu_j) (\mu-\hat
\mu_j)^T,
\]
it is easy to prove the following bound.

%co4.4 #&#
%
\begin{corollary}
Let
\[
\eps_n(\delta):=15.2\sqrt{\frac{  (\mb E\|X-\mu\|^4-\tr
(\Sigma
^2)  )\log(1.4/\delta)}{n}}+178\frac{\tr(\Sigma)\log
(1.4/\delta)}{n}
\]
and assume that $\Delta_m>4\eps_n(\delta)$.
Then
\[
\Pr \biggl(\llVert \widehat\proj_m-\proj_m\rrVert
_{{\mathrm
F}}\geq\frac{2\eps
_n(\delta)}{\Delta_m} \biggr)\leq\delta.
\]
\end{corollary}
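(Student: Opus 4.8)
The plan is to apply Theorem~\ref{th:main} directly to the independent estimators $\hat\Sigma_1,\ldots,\hat\Sigma_k$ and then feed the resulting Frobenius-norm deviation bound into the Davis--Kahan argument exactly as in the proof of the preceding corollary. Thus the only real work is to verify the weak concentration hypothesis (\ref{weak_con}) for each $\hat\Sigma_j$ with confidence $p_\ast$ and with the threshold that, after inflation by $C_{\alpha_\ast}$, equals $\eps_n(\delta)$.

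First I would start from the stated identity $\hat\Sigma_j=\frac{1}{|G_j|}\sum_{i\in G_j}(X_i-\mu)(X_i-\mu)^T-(\mu-\hat\mu_j)(\mu-\hat\mu_j)^T$ and split by the triangle inequality, noting that $\|(\mu-\hat\mu_j)(\mu-\hat\mu_j)^T\|_{{\rm F}}=\|\mu-\hat\mu_j\|^2$. It then suffices to control the two summands separately, each on an event of probability at least $1-p_\ast/2$, so that their intersection (of probability at least $1-p_\ast$) gives the required bound on $\|\hat\Sigma_j-\Sigma\|_{{\rm F}}$. For the ``centered sample covariance'' term I would observe that it is an average of $|G_j|$ i.i.d.\ matrices with mean $\Sigma$ and $\mb E\,\|(X-\mu)(X-\mu)^T-\Sigma\|_{{\rm F}}^2=\mb E\|X-\mu\|^4-\tr(\Sigma^2)$, so Chebyshev's inequality yields a deviation of order $\sqrt{(\mb E\|X-\mu\|^4-\tr(\Sigma^2))/(|G_j|\,p_\ast)}$; using $|G_j|\geq n/(2k)$ and $k\leq \log(1.4/\delta)/\psi(\alpha_\ast;p_\ast)$ this becomes (a constant times) the square-root summand of $\eps_n(\delta)$. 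For the rank-one correction term I would use $\mb E\|\hat\mu_j-\mu\|^2\leq \tr(\Sigma)/|G_j|$ and Chebyshev applied to $\|\hat\mu_j-\mu\|^2$, which produces a deviation of order $\tr(\Sigma)/(|G_j|\,p_\ast)$, i.e.\ the second summand of $\eps_n(\delta)$.

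Combining the two bounds gives $\Pr\big(\|\hat\Sigma_j-\Sigma\|_{{\rm F}}>\eps\big)\leq p_\ast$ for an $\eps$ that, after multiplication by $C_{\alpha_\ast}$ and the numerical estimates $2C_{\alpha_\ast}/\sqrt{p_\ast\psi(\alpha_\ast;p_\ast)}\leq 15.2$ and $4C_{\alpha_\ast}/(p_\ast\psi(\alpha_\ast;p_\ast))\leq 178$ (the same constant bookkeeping already used for Corollary~\ref{th:mean}, together with $k\psi(\alpha_\ast;p_\ast)\geq\log(1/\delta)$ so that $k\leq\log(1.4/\delta)/\psi(\alpha_\ast;p_\ast)$), equals $\eps_n(\delta)$. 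Since the $\hat\Sigma_j$ are independent and $k$ is chosen so that $e^{-k\psi(\alpha_\ast;p_\ast)}\leq\delta$, Theorem~\ref{th:main} applied with $\alpha=\alpha_\ast$ and $p=p_\ast$ yields $\Pr\big(\|\hat\Sigma-\Sigma\|_{{\rm F}}>\eps_n(\delta)\big)\leq\delta$.

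On the complementary event, the assumption $\Delta_m>4\eps_n(\delta)$ guarantees $\|\hat\Sigma-\Sigma\|_{{\rm F}}<\Delta_m/4$, so the Davis--Kahan bound (\ref{eq:proj}) (valid verbatim here, since $\hat\Sigma$ is symmetric positive semidefinite and $m\leq\lfloor n/k\rfloor$) gives $\|\widehat{\proj_m}-\proj_m\|_{{\rm F}}\leq 2\|\hat\Sigma-\Sigma\|_{{\rm F}}/\Delta_m\leq 2\eps_n(\delta)/\Delta_m$, which is the claim. The only genuinely delicate point is the constant accounting in the previous paragraph---splitting the failure probability as $p_\ast/2+p_\ast/2$, keeping track of the factor $|G_j|\geq n/(2k)$, and checking that the resulting numerical constants do not exceed $15.2$ and $178$; the rest is a routine concatenation of Theorem~\ref{th:main} with Davis--Kahan.
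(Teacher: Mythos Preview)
Your proposal is correct and is precisely the argument the paper has in mind: the paper does not write out a proof for this corollary, but states the decomposition $\hat\Sigma_j=\frac{1}{|G_j|}\sum_{i\in G_j}(X_i-\mu)(X_i-\mu)^T-(\mu-\hat\mu_j)(\mu-\hat\mu_j)^T$ and declares the bound ``easy to prove''. Your outline---split the failure probability as $p_\ast/2+p_\ast/2$ exactly as in the proof of the trace-estimation Proposition, feed the resulting weak concentration into Theorem~\ref{th:main}, then invoke Davis--Kahan as in the first PCA corollary---is that intended proof, and your constant bookkeeping ($2C_{\alpha_\ast}/\sqrt{p_\ast\psi(\alpha_\ast;p_\ast)}\leq 15.2$, $4C_{\alpha_\ast}/(p_\ast\psi(\alpha_\ast;p_\ast))\leq 178$) is accurate.
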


%s4.3 #&#
\subsection{High-dimensional sparse linear regression}

Everywhere in this subsection, $\|\cdot\|$ stands for the standard
Euclidean norm, $\|\cdot\|_1$ denotes the $\ell_1$-norm and $\|
\cdot\|_\infty$ -- the sup-norm of a vector.

Let $x_1,\ldots,x_n\in\mb R^D$ be a fixed collection of vectors and
let $Y_j$ be noisy linear measurements of $\lambda_0\in\mb R^D$:
%
%e4.11 #&#
%
\begin{equation}
\label{eq:lin_model} Y_j=\lambda_0^T
x_j+\xi_j,
\end{equation}
where $\xi_j$ are independent zero-mean random variables such that
$\var(\xi_j)\leq\sigma^2, 1\leq j\leq n$.
Set $\mb X:=(x_1|\ldots|x_n)^T$.

We are interested in the case when $D\gg n$ and $\lambda_0$ is sparse,
meaning that
\[
N(\lambda_0):= \bigl |\supp(\lambda_0) \bigr |= \bigl |\{j: \lambda
_{0,j}\ne0\} \bigr |=s\ll D.
\]
In this situation, a (version of) the famous Lasso estimator \cite
{tibshirani1996regression} of $\lambda_0$ is obtained as a solution of
the following optimization problem:
%
%e4.12 #&#
%
\begin{equation}
\label{eq:lasso} \hat\lambda_\eps:= \argmin_{\lambda\in\mb R^D} \Biggl[
\frac{1}{n}\sum_{j=1}^n
\bigl(Y_j-\lambda^T x_j
\bigr)^2+\eps\| \lambda \|_1 \Biggr].
\end{equation}
The goal of this section is to extend the applicability of some
well-known results for this estimator to the case of a heavy-tailed
noise distribution.

Existing literature on high-dimensional linear regression suggests
several ways to handle corrupted measurements, for instance, by using a
different loss function (e.g., the so-called \textit{Huber's} loss
\cite{lambert2011robust}), or by implementing a more flexible penalty
term \cite{wright2010dense,Nguyen2013Robust-Lasso-Wi00}.
In particular, in \cite{Nguyen2013Robust-Lasso-Wi00} authors study the model
%
%e4.13 #&#
%
\begin{equation}
\label{eq:big_noise} Y=X\lambda_0+e^\ast+\xi,
\end{equation}
where $X\in\mb R^{n\times D}$, $\xi\in\mb R^n$ is the additive noise
and $e^\ast\in\mb R^n$ is a sparse error vector with unknown support
and arbitrary large entries.
It is shown that if the rows of $X$ are independent Gaussian random
vectors, then is possible to accurately estimate both $\lambda_0$ and
$e^\ast$ by adding an extra penalty term:
\[
(\tilde\lambda_\eps, \tilde e_\eps):= \argmin
_{\lambda\in\mb R^D, e\in\mb R^n} \biggl[\frac{1}{n}\llVert Y-X\lambda -e\rrVert
^2+\eps_1\|\lambda\|_1+\eps_2\|e
\|_1 \biggr].
\]
However, as in the case of the usual Lasso, confidence of estimation
depends on the distribution of $\xi$.
In particular, Gaussian-type concentration holds only if the entries of
$\xi$ have sub-Gaussian tails.

The main result of this subsection (stated in Theorem~\ref{th:lasso2})
provides strong performance guarantees for the robust version of the
usual Lasso estimator (\ref{eq:lasso}) and requires only standard
conditions on the degree of sparsity and restricted eigenvalues of the design.
Similar method can be used to improve performance guarantees for the
model (\ref{eq:big_noise}) in the case of heavy-tailed noise $\xi$.

Probabilistic bounds for the error $\|\hat\lambda_\eps-\lambda_0\|$
crucially depend on integrability properties of the noise variable.
We will recall some known bounds for the case when $\xi_j\sim
N(0,\sigma^2)$, $j=1,\ldots, n$ (of course, similar results hold for
\textit{sub-Gaussian} noise as well).
For $J\subset\{1,\ldots,D\}$ and $u\in\mb R^D$, define $u_J\in\mb
R^D$ by $  (u_J  )_j=u_j, j\in J$ and $  (u_J
)_j=0, j\in J^c$
(here, $J^c$ denotes the complement of a set $J$).

%de4.1 #&#
%
\begin{definition}[(Restricted eigenvalue condition, \cite
{bickel2009simultaneous})]
Let $1\leq s\leq D$ and $c_0>0$. We will say that the restricted
eigenvalue condition holds if
\[
\kappa(s,c_0):=\mathop{\min_{J\subset\{1,\ldots, D\}}}_{|J|\leq s}
\mathop{\min_{u\in\mb R^D,u\ne0}}_{\llVert u_{J^c}\rrVert _1\leq
c_0\llVert u_J\rrVert _1} \frac{\|\mb Xu\|}{\sqrt n\|u_J\|}>0.
\]
\end{definition}

Let $\Theta:= \|\frac{1} n \sum_{j=1}^n \xi_j x_j \|_{\infty}$.
The following result shows that the amount of regularization $\eps$
sufficient for recovery of $\lambda_0$ is closely related to the size
of $\Theta$.
%
%th4.1 #&#

\begin{theorem}
\label{th:lasso1}
Assume that the diagonal elements of the matrix $\frac{\mb X^T\mb
X}{n}$ are bounded by $1$ and
\[
\kappa\bigl(2N(\lambda_0),3\bigr)>0.
\]
On the event $\m E=  \{\eps\geq4\Theta  \}$, the following
inequality holds:
\[
\llVert \hat\lambda_\eps-\lambda_0\rrVert ^2
\leq64\eps^2\frac
{N(\lambda_0)}{\kappa^4(2N(\lambda_0),3)}.
\]
In particular, when $\xi_j\sim N(0,\sigma^2)$ and $\eps=4\sigma
t\sqrt{\frac{\log D}{n}}$,
\[
\Pr(\m E)\geq1-\frac{2}{D^{t^2-2}}.
\]
\end{theorem}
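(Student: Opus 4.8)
The plan is to run the standard oracle-inequality argument for the Lasso under a restricted eigenvalue assumption (in the spirit of \cite{bickel2009simultaneous}) on the event $\m E$, and then to control $\Theta$ in the Gaussian case by a coordinatewise union bound. Write $v:=\hat\lambda_\eps-\lambda_0$, $J:=\supp(\lambda_0)$ and $s:=N(\lambda_0)=|J|$. Comparing the value of the penalized objective in (\ref{eq:lasso}) at $\hat\lambda_\eps$ with its value at $\lambda_0$, substituting $Y_j=\lambda_0^Tx_j+\xi_j$, and expanding the squares gives the basic inequality
\[
\frac{1}{n}\|\mb Xv\|^2\leq\frac{2}{n}\sum_{j=1}^n\xi_j\,x_j^Tv+\eps\big(\|\lambda_0\|_1-\|\hat\lambda_\eps\|_1\big).
\]
On $\m E$ the stochastic term is at most $2\|v\|_1\Theta\leq\frac{\eps}{2}\|v\|_1$ by H\"older's inequality, and the triangle inequality applied to the decomposition of $\lambda_0+v$ along $J$ and $J^c$ gives $\|\lambda_0\|_1-\|\hat\lambda_\eps\|_1\leq\|v_J\|_1-\|v_{J^c}\|_1$. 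Inserting these two bounds and using $\|v\|_1=\|v_J\|_1+\|v_{J^c}\|_1$ produces
\[
0\leq\frac{1}{n}\|\mb Xv\|^2\leq\frac{3\eps}{2}\|v_J\|_1-\frac{\eps}{2}\|v_{J^c}\|_1,
\]
which simultaneously yields the cone inequality $\|v_{J^c}\|_1\leq 3\|v_J\|_1$ and the prediction bound $\frac{1}{n}\|\mb Xv\|^2\leq\frac{3\eps}{2}\|v_J\|_1$.

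By the cone inequality, $v$ belongs to the cone appearing in the definition of $\kappa(\cdot,3)$. Let $J_1$ index the $s$ largest-in-absolute-value coordinates of $v$ outside $J$ and set $J_{01}:=J\cup J_1$, so $|J_{01}|\leq 2s$. A routine estimate (every coordinate of $v_{(J_{01})^c}$ is at most $\|v_{J^c}\|_1/s$, together with the cone inequality) gives $\|v_{(J_{01})^c}\|\leq\|v_{J^c}\|_1/\sqrt s\leq 3\|v_J\|\leq 3\|v_{J_{01}}\|$, hence $\|v\|\leq 4\|v_{J_{01}}\|$; moreover $\|v_{(J_{01})^c}\|_1\leq 3\|v_{J_{01}}\|_1$, so the Restricted Eigenvalue condition applies with the index set $J_{01}$ of cardinality at most $2s$ and gives $\frac{1}{n}\|\mb Xv\|^2\geq\kappa^2(2s,3)\|v_{J_{01}}\|^2$. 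Combining this with the prediction bound and with $\|v_J\|_1\leq\sqrt s\,\|v_{J_{01}}\|$ bounds $\|v_{J_{01}}\|$, and then $\|v\|\leq 4\|v_{J_{01}}\|$ bounds $\|v\|$; keeping track of the numerical constants yields $\|v\|^2\leq 64\,\eps^2 N(\lambda_0)/\kappa^4(2N(\lambda_0),3)$, the claimed bound on $\m E$. The one place that requires genuine care is this last step — preserving the cone constant $3$ through the enlargement $J\to J_{01}$ and doing the bookkeeping that turns the prediction bound into an $\ell_2$-bound with the correct power of $\kappa$ and set size $2N(\lambda_0)$; everything else is mechanical. (The hypothesis on the diagonal of $\mb X^T\mb X/n$ is not used up to this point.)

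For the final assertion, fix $l\in\{1,\dots,D\}$; since the $\xi_j\sim N(0,\sigma^2)$ are independent, the $l$-th coordinate of $\frac{1}{n}\sum_{j=1}^n\xi_j x_j$ is centered Gaussian with variance $\frac{\sigma^2}{n^2}\sum_{j=1}^n x_{j,l}^2=\frac{\sigma^2}{n}\big(\mb X^T\mb X/n\big)_{ll}\leq\frac{\sigma^2}{n}$ by hypothesis. The standard Gaussian tail bound together with a union bound over $l=1,\dots,D$ gives $\Pr(\Theta>u)\leq 2D\exp\!\big(-nu^2/(2\sigma^2)\big)$ for every $u>0$; substituting $u=\eps/4=\sigma t\sqrt{\log D/n}$ and recalling that $\m E=\{4\Theta\leq\eps\}$ leads to $\Pr(\m E)\geq 1-\tfrac{2}{D^{t^2-2}}$, as claimed.
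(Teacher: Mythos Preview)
Your argument is correct and proceeds along essentially the same lines as the paper's: establish the basic inequality and the cone condition $\|v_{J^c}\|_1\leq 3\|v_J\|_1$ on $\m E$, enlarge to an index set of size $2N(\lambda_0)$ and apply the restricted eigenvalue condition to convert the prediction bound into an $\ell_2$-bound, then in the Gaussian case control $\Theta$ by a coordinatewise union bound. The only difference is cosmetic: the paper outsources the first steps to \cite{buhlmann2011statistics} (Theorem~6.1 and Lemma~6.3) and \cite{Koltchinskii2011Oracle-inequali00} (Lemma~7.1), while you carry them out directly; your self-contained chain in fact gives the slightly sharper constant $36$ in place of $64$.

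One small arithmetic remark on the last line: your substitution actually yields $2D\exp\!\big(-t^2\log D/2\big)=2D^{1-t^2/2}=2/D^{(t^2-2)/2}$, not $2/D^{t^2-2}$. The paper's own computation (via the reparametrization $\eps=4\sigma\sqrt{(t'^2+2\log D)/n}$, giving $2\exp(-t'^2/2)$ with $t'^2=(t^2-2)\log D$) produces the same exponent, so the displayed $t^2-2$ in the theorem statement appears to be a typo for $(t^2-2)/2$.
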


\begin{pf}
This result is similar to the statement of Theorem~7.2 in \cite
{bickel2009simultaneous}, and its proof can be obtained along the same lines.
See \cite{minsker2013geometric} for more details.
\end{pf}

Our next goal is to construct an estimator of $\lambda_0$ which admits
high confidence error bounds without restrictive assumptions on the
noise variable, such as sub-Gaussian tails.
Let $t>0$ be fixed, and set $k:=\lfloor3.5 t\rfloor+1$, $m=
\lfloor
\frac{n}{k}  \rfloor$ (as before, we will assume that $k\leq
\frac{n} 2$).
For $1\leq l \leq k$, let $G_l:=\{(l-1)m+1,\ldots,lm\}$ and
\[
\mb X_l= (x_{j_1}|\ldots|x_{j_m} )^T,
\qquad j_i=i+(l-1)m\in G_l
\]
be the $m\times D$ design matrix corresponding to the $l$th group of
design vectors $\{x_j, j\in G_l\}$.
Moreover, let $\kappa_l(s,c_0)$ be the corresponding restricted eigenvalues.

Define
\[
\hat\lambda^l_\eps:= \argmin _{\lambda\in\mb R^D} \biggl[
\frac{1}{|G_l|}\sum_{j\in G_l} \bigl(Y_j-
\lambda^T x_j \bigr)^2+\eps\| \lambda
\|_1 \biggr]
\]
and
%
%e4.14 #&#
%
\begin{equation}
\label{eq:median_lasso} \hat\lambda_{\eps}^\ast:=\med\bigl(\hat
\lambda^1_\eps,\ldots,\hat \lambda^{k}_\eps
\bigr),
\end{equation}
where the geometric median is taken with respect to the standard
Euclidean norm in $\mb R^D$.
The following result holds.

%th4.2 #&#
%
\begin{theorem}
\label{th:lasso2}
Assume that $\|x_j\|_\infty\leq M, 1\leq j\leq n$ and $\bar\kappa
(2N(\lambda_0),3):=\min_{1\leq l\leq k}\kappa_l(2N(\lambda_0),\allowbreak  3)>0$.
Then for any
\[
\eps\geq95 M\sigma\sqrt{\frac{t+2/7}{n}\log(2D)},
\]
with probability $\geq1-\mathrm{e}^{-t}$
\[
\bigl\llVert \hat\lambda_\eps^\ast-\lambda_0
\bigr\rrVert ^2\leq83\eps ^2\frac
{N(\lambda_0)}{\bar\kappa^4(2N(\lambda_0),3)}.
\]
\end{theorem}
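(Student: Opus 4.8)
The plan is to apply Theorem~\ref{th:main} to the independent estimators $\hat\lambda^1_\eps,\ldots,\hat\lambda^k_\eps$ with $\mu=\lambda_0$ and the Euclidean norm on $\mb R^D$, so that the geometric median $\hat\lambda_\eps^\ast$ inherits the exponential deviation bound. For this I need two ingredients: first, a ``weak concentration'' statement of the form $\Pr(\|\hat\lambda^l_\eps-\lambda_0\|>\eps')\leq p_\ast$ for each block $l$, with the correct rate $\eps'$; second, the bookkeeping with $k=\lfloor 3.5t\rfloor+1$, $\alpha_\ast=7/18$, $p_\ast=0.1$ and the constant $C_{\alpha_\ast}$ that turns $e^{-k\psi(\alpha_\ast;p_\ast)}$ into $e^{-t}$ and produces the stated numerical constants $95$ and $83$.

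For the weak concentration, I would invoke Theorem~\ref{th:lasso1} applied to the $l$-th block. On the event $\m E_l=\{\eps\geq 4\Theta_l\}$, where $\Theta_l:=\|\frac{1}{|G_l|}\sum_{j\in G_l}\xi_j x_j\|_\infty$, that theorem gives $\|\hat\lambda^l_\eps-\lambda_0\|^2\leq 64\eps^2 N(\lambda_0)/\bar\kappa^4(2N(\lambda_0),3)$, using $\bar\kappa\leq\kappa_l$ and that $\|x_j\|_\infty\leq M\leq 1$ forces the diagonal of $\mb X_l^T\mb X_l/m$ to be bounded appropriately (one may need to absorb $M$ by rescaling, which is where the factor $M$ in the final bound comes from). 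So it suffices to control $\Pr(\m E_l^c)=\Pr(\Theta_l>\eps/4)$. Each coordinate of $\frac{1}{m}\sum_{j\in G_l}\xi_j x_j$ has mean zero and variance $\leq \sigma^2 M^2/m\leq \sigma^2 M^2 \cdot 2k/n$; by Chebyshev applied coordinatewise and a union bound over the $D$ coordinates, $\Pr(\Theta_l>\eps/4)\leq \frac{16 D\sigma^2 M^2}{m\eps^2}\cdot(\text{const})$. Requiring this to be $\leq p_\ast=0.1$ gives a lower bound on $\eps$ of the form $\eps\geq c\,M\sigma\sqrt{\frac{k}{n}\log D}$; since $k\leq 3.5t+1$, this matches $\eps\geq 95 M\sigma\sqrt{\frac{t+2/7}{n}\log(2D)}$ after optimizing constants (the $\log(2D)$ versus $\log D$ and the $t+2/7$ versus $3.5t+1$ discrepancies are exactly the slack needed to make the numerics close, and one should use the crude Chebyshev-type tail rather than a Gaussian tail here to avoid any distributional assumption on $\xi_j$).

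With weak concentration in hand at level $\eps'=8\eps\sqrt{N(\lambda_0)}/\bar\kappa^2(2N(\lambda_0),3)$ and probability $p_\ast$, Theorem~\ref{th:main} yields $\Pr(\|\hat\lambda_\eps^\ast-\lambda_0\|>C_{\alpha_\ast}\eps')\leq e^{-k\psi(\alpha_\ast;p_\ast)}$. Since $k\geq \frac{\log(1/\delta)}{\psi(\alpha_\ast;p_\ast)}$ is replaced here by the choice $k=\lfloor 3.5t\rfloor+1$ and $\psi(\alpha_\ast;p_\ast)\geq 1/3.5$, we get $k\psi(\alpha_\ast;p_\ast)\geq t$, hence the probability is $\leq e^{-t}$. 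Finally $\|\hat\lambda_\eps^\ast-\lambda_0\|^2\leq C_{\alpha_\ast}^2\cdot 64\,\eps^2 N(\lambda_0)/\bar\kappa^4$, and with $\alpha_\ast=7/18$ one has $C_{\alpha_\ast}^2=(1-\alpha_\ast)^2/(1-2\alpha_\ast)=\frac{(11/18)^2}{2/9}=\frac{121}{72}\approx 1.68$, so $64\,C_{\alpha_\ast}^2\approx 107$; squeezing this down to the stated $83$ presumably uses the sharper per-block bound from Theorem~\ref{th:lasso1} together with a slightly more careful choice of the radius $\eps'$ (e.g.\ tracking $\kappa^2$ versus $\kappa^4$ and the factor $4$ vs $\sqrt{2}$ in the $\ell_1$-cone argument), which is routine.

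The main obstacle is the constant-chasing: making the three sources of slack --- the Chebyshev union bound over $D$ coordinates (controlling $\Theta_l$), the relation between $k$, $t$ and $\psi(\alpha_\ast;p_\ast)$, and the propagation of $C_{\alpha_\ast}$ through the squared-error bound of Theorem~\ref{th:lasso1} --- fit together so that the advertised numbers $95$ and $83$ actually come out. Conceptually there is nothing hard beyond assembling Theorem~\ref{th:lasso1} (applied blockwise) and Theorem~\ref{th:main}; the only genuine modeling point is that the blockwise $\Theta_l$ must be controlled by a moment (Chebyshev) inequality rather than a tail bound, so that the only hypothesis used on $\xi_j$ is $\var(\xi_j)\leq\sigma^2$.
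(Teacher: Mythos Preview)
Your overall architecture is exactly that of the paper: apply Theorem~\ref{th:lasso1} blockwise to get a weak concentration bound at level $p_\ast=0.1$, then feed this into Theorem~\ref{th:main} with $\alpha_\ast=7/18$ and $k=\lfloor 3.5t\rfloor+1$. The constant issue you flag at the end ($64\,C_{\alpha_\ast}^2\approx 107$ versus the stated $83$) is real but minor.

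There is, however, a genuine gap in the weak-concentration step. You control $\Theta_l=\big\|\frac{1}{m}\sum_{j\in G_l}\xi_j x_j\big\|_\infty$ by coordinatewise Chebyshev plus a union bound over the $D$ coordinates, obtaining
\[
\Pr\big(\Theta_l>\eps/4\big)\leq \frac{16\,D\,\sigma^2 M^2}{m(\eps/4)^2}\cdot(\text{const}).
\]
Solving this for $\leq p_\ast$ yields $\eps\gtrsim M\sigma\sqrt{D k/n}$, \emph{not} $\eps\gtrsim M\sigma\sqrt{(k/n)\log D}$ as you then assert; a Chebyshev-plus-union-bound argument cannot convert a factor $D$ into $\log D$. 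With that bound the theorem's condition $\eps\geq 95 M\sigma\sqrt{(t+2/7)\log(2D)/n}$ is far too weak, and the conclusion does not follow.

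The paper avoids this by bounding the \emph{second moment} of the maximum directly via Nemirovski's inequality (Lemma~14.24 in \cite{buhlmann2011statistics}):
\[
\mb E\,\Theta_l^2\;\leq\;\frac{8\log(2D)}{m}\cdot\frac{1}{m}\sum_{j\in G_l}\|x_j\|_\infty^2\,\mb E\xi_j^2\;\leq\;\frac{8\log(2D)\,M^2\sigma^2}{m},
\]
and only then applies Chebyshev once to $\Theta_l$ (not coordinatewise). This is where the $\log(2D)$ factor legitimately appears, and it is the missing ingredient in your proposal. Once you replace your union-bound step by Nemirovski's inequality, the rest of your outline goes through exactly as in the paper: $\eps\geq 4\cdot 4\sigma M\sqrt{k\log(2D)/(0.1\,n)}$ with $k\leq 3.5(t+2/7)$ gives $16\sqrt{35}\leq 95$, and Theorem~\ref{th:main} finishes the argument.
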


\begin{pf}
We will first obtain a ``weak concentration'' bound from Theorem~\ref{th:lasso1} and then apply Theorem~\ref{th:main} with
$\alpha=\frac{7}{18}$ to get the result.

To this end, we need to estimate $\Theta_l:=\llVert \frac{1} m \sum_{j\in
G_l} \xi_j x_j\rrVert _{\infty}$, $l=1,\ldots,k$.
%
%le4.1 #&#

\begin{lemma}[(Nemirovski's inequality, Lemma~5.2.2 in \cite
{nemirovski2000topics} or Lemma~14.24 in \cite{buhlmann2011statistics})]
Assume that $D\geq3$. Then for any $l$, $1\leq l\leq k$,
\[
\mb E \Theta_l^2\leq\frac{8 \log(2D)}{m}
\frac{1}{m}\sum_{j\in
G_l} \|x_j
\|^2_{\infty} \mb E\xi_j^2.
\]
\end{lemma}

By our assumptions, $\|x_j\|_\infty\leq M$ and $\mb E\xi_j^2\leq
\sigma^2$ for all $j$, hence Chebyshev's inequality gives that for any
$1\leq l\leq k$,
\[
\Pr (\Theta_l\geq t )\leq\frac{8\log(2D)M^2\sigma
^2}{mt^2}\leq0.1
\]
whenever $t\geq4\sigma M\sqrt{\frac{k\log(2D)}{0.1 n }}$.
In particular, for $\eps\geq16\sigma M\sqrt{\frac{3.5(t+2/7)\log
(2D)}{ 0.1 n}}$, the bound of Theorem~\ref{th:lasso1} holds for $\hat
\lambda_\eps^l$ with probability $\geq1-0.1$;
note that $16\sqrt{\frac{3.5}{0.1}}\leq95$.
It remains to apply Theorem~\ref{th:main} to complete the proof.
\end{pf}

%re4.3 #&#
%
\begin{remark}
We stated the bounds only for the Euclidean distance $\|\hat\lambda
_\eps^\ast-\lambda_0\|$; this formulation is close to the compressed
sensing framework \cite{candes2006stable}.
If, for example, the design vectors $x,x_1,\ldots, x_n$ are i.i.d.
with some known distribution $\Pi$, one can use the median with
respect to $\|\cdot\|_{L_2(\Pi)}$ norm in the definition of $\hat
\lambda_\eps^\ast$ and obtain the bounds for the
\textit{prediction error} $\llVert \hat\lambda_\eps^\ast-\lambda
_0\rrVert ^2_{L_2(\Pi)}:=\mb E  ((\hat\lambda_\eps^\ast-\lambda_0)^T x
  )^2$.
\end{remark}

%s4.4 #&#
\subsection{Matrix regression with isotropic sub-Gaussian design}
\label{sec:low_rank}

In this section, we will extend some results related to recovery of
low-rank matrices from noisy linear measurements to the case of
heavy-tailed noise distribution.
Assume that the random couple $(X,Y)\in\mb R^{D\times D}\times\mb R$
is generated according to the following matrix regression model:
%
%e4.15 #&#
%
\begin{equation}
\label{eq:trace_reg} Y=f_\ast(X)+\xi,
\end{equation}
where $f_\ast$ is the regression function,
$X\in\mb R^{D\times D}$ is a random symmetric matrix with (unknown)
distribution
$\Pi$ and $\xi$ is a zero-mean random variable independent of $X$
with $\var(\xi)\leq\sigma^2$.
We will be mostly interested in a situation when $f_\ast$ can be well
approximated by a linear functional $ \langle A,\cdot
\rangle$, where $A$ a
symmetric matrix of small rank and $ \langle A_1,A_2 \rangle
:=\tr(A_1^T A_2)$.

The problem of low-rank matrix estimation has attracted a lot of
attention during the last several years, for example, see \cite
{candes2009exact,recht2010guaranteed} and references therein.
Recovery guarantees were later extended to allow the presence of noise.
Results in this direction can be found in \cite
{candes2011tight,rohde2011estimation,koltchinskii2011nuclear,negahban2012restricted},
to name a few.

In this section, we mainly follow the approach of \cite
{Koltchinskii2011Oracle-inequali00} (Chapter~9) which deals with an
important case of \textit{sub-Gaussian design} (also, see \cite
{candes2011tight,negahban2011estimation} for a discussion of related
problems), and use results of this work as a basis for our exposition.
Everywhere below, $\|\cdot\|_{\mathrm F}$ stands for the Frobenius
norm of
a matrix, $\|\cdot\|_\op$ -- for the operator (spectral) norm, and $\|
\cdot\|_1$ -- for the nuclear norm of a matrix.

Given $A\in\mb R^{D\times D}$, denote
$\|A\|^2_{L_2(\Pi)}:=\mb E  (\tr(A^T X)  )^2$.
Recall that a random variable $\zeta$ is called \textit{sub-Gaussian}
with parameter $\gamma^2$ if for all $s\in\mb R$,
$\mb E^{s\zeta}\leq\mathrm{e}^{s^2\gamma^2/2}$.
We will be interested in the special case when $X$ is
\textit{sub-Gaussian}, meaning that there exists $\gamma=\gamma(\Pi
)>0$ such
that for all symmetric matrices $A$, $ \langle A,X \rangle$
is a sub-Gaussian
random variable with parameter\vspace*{2pt} $\gamma^2\|A\|^2_{L_2(\Pi)}$ (in
particular, this is the case when the entries of $X$ are jointly
Gaussian, with $\gamma=1$).
Additionally, we will assume that $X$ is isotropic, so that $\|A\|
_{L_2(\Pi)}=\|A\|_{{\mathrm F}}$ for any symmetric matrix $A$.

In particular, these assumptions hold in the following important cases:
\begin{enumerate}[(b)]
\item[(a)] $X$ is symmetric and such that $\{X_{i,j}, 1\leq i\leq
j\leq D\}$ are i.i.d. centered normal random variables with $\mb
EX_{i,j}^2=\frac{1}2$, $i<j$ and $\mb EX_{i,i}^2=1$, $i=1,\ldots, D$.
\item[(b)]$X$ is symmetric and such that $X_{i,j}=\frac{1}{\sqrt
2}\eps_{i,j}$, $i<j$, $X_{i,i}=\eps_{i,i}$, $1\leq i\leq D$, where
$\eps
_{i,j}$ are i.i.d. Rademacher random variables (i.e., random signs).
\item[(c)] In a special case when all involved matrices are diagonal,
the problem becomes a version of sparse linear regression with random design.
In this case, isotropic design includes a situation when $X$ is a
random diagonal matrix $X=\operatorname{diag}(x_1,\ldots,x_D)$, where
$x_i$ are
i.i.d. standard normal or Rademacher random variables.
\end{enumerate}

%re4.4 #&#
%
\begin{remark}
In what follows, $C_1,C_2,\ldots$ denote the constants that may depend
on parameters of the underlying distribution (such as $\gamma$).
\end{remark}

Given $\alpha\geq1$, define $\|\zeta\|_{\psi_\alpha}:=\min
\{
r>0: \mb E\exp  (  (\frac{|\zeta|}{r}  )^{\alpha
}  )\leq2  \}$.
We will mainly use $\|\cdot\|_{\psi_\alpha}$-norms for $\alpha=1,2$.
The following elementary inequality holds: for any random variables
$\zeta_1$,~$\zeta_2$,
%
%e4.16 #&#
%
\begin{equation}
\label{eq:psi1-psi2} \|\zeta_1\zeta_2\|_{\psi_1}\leq\|
\zeta_1\|_{\psi_2}\|\zeta_2\| _{\psi_2}.
\end{equation}
It is easy to see that $\|\zeta\|_{\psi_2}<\infty$ for any
sub-Gaussian random variable $\zeta$.
It follows from Proposition~9.1 in \cite
{Koltchinskii2011Oracle-inequali00} that there exists $C(\gamma)>0$
such that for any sub-Gaussian isotropic matrix~$X$,
%
%e4.17 #&#
%
\begin{equation}
\label{eq:psi2} \bigl \|\|X\|_\op \bigr \|_{\psi_2}\leq C(\gamma)\sqrt D.
\end{equation}
We will also need the following useful inequality:
for any $p\geq1$,
%
%e4.18 #&#
%
\begin{equation}
\label{eq:equiv} \mb E^{1/p}\bigl | \langle A,X \rangle\bigr |^{p}\leq
C_{p,\gamma}\|A\| _{L_2(\Pi)}.
\end{equation}
The proofs of the facts mentioned above can be found in \cite
{Koltchinskii2011Oracle-inequali00}.

Let $(X_1,Y_1),\ldots,(X_n,Y_n)$ be i.i.d. observations with the same
distribution as $(X,Y)$.
We are mainly interested in the case when $D< n\ll D^2$.
In this situation, it is impossible to estimate $A_0$ consistently in
general, however, if $A_0$ is low-rank (or approximately low-rank),
then the solution of the following optimization problem provides a good
approximation to $A_0$:
%
%e4.19 #&#
%
\begin{equation}
\label{eq:nuc_norm} \hat A_\eps:=\argmin_{A\in\mb L} \Biggl[
\frac{1} n\sum_{j=1}^n
\bigl(Y_j - \langle A,X_j \rangle \bigr)^2+
\eps\|A\|_1 \Biggr].
\end{equation}
Here, $\mb L$ is a bounded, closed, convex subset of a set of all
$D\times D$ symmetric matrices.
%
%re4.5 #&#

\begin{remark}
All results of this subsection extend to the case of unbounded $\mb L$
and non-isotropic sub-Gaussian design.
However, our assumptions still cover important examples and yield less
technical statements; see Theorem~9.3 in \cite
{Koltchinskii2011Oracle-inequali00} for details on the general case.
Results for the arbitrary rectangular matrices follow from the special
case discussed here, see the remark on page 202 of \cite
{Koltchinskii2011Oracle-inequali00}.
\end{remark}

We proceed by recalling the performance guarantees for $\hat A_\eps$.
Let $R_\mb L:=\sup_{A\in\mb L}\|A\|_1$, and define
\[
\Theta:=\frac{1} n \sum_{j=1}^n
\xi_j X_j.
\]
%
%th4.3 #&#

\begin{theorem}[(Theorem~9.4 in \cite{Koltchinskii2011Oracle-inequali00})]
\label{th:nuclear1}
There exist constants $c$, $C$ with the following property:
let $\kappa:=\log\log_2 (D R_\mb L)$, and assume that $t\geq1$ is
such that $t_{n,D}\leq cn$, where $t_{n,D}:=(t+\kappa)\log n+\log(2D)$.
Define the event $\m E:=  \{\eps\geq2\|\Theta\|_\op  \}$.
The following bound holds with probability $\geq\Pr  (\m
E  )-\mathrm{e}^{-t}$:
%
%e4.20 #&#
%
\begin{equation}
\label{eq:comp1} \|\hat A_\eps-A_0\|^2_{\mathrm F}
\leq\inf_{A\in\mb L} \biggl[2\|A-A_0\|^2_{\mathrm F}+C
\biggl(\eps^2 \rank(A)+R^2_\mb L
\frac
{Dt_{n,D}}{n}+\frac{1}{n} \biggr) \biggr].
\end{equation}
\end{theorem}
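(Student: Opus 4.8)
The plan is to prove this as a standard slow-rate oracle inequality for the nuclear-norm-penalized least-squares estimator $\hat A_\eps$, assembled from four ingredients: the basic inequality coming from the optimality of $\hat A_\eps$ together with convexity of the penalty; control of the stochastic linear term on the event $\m E$; a restricted strong convexity estimate comparing the empirical squared loss with the squared Frobenius norm under the subgaussian isotropic design; and solving the resulting quadratic inequality in $\|\hat A_\eps-A_0\|_{\rm F}$. Throughout, write $\|B\|_{L_2(\Pi_n)}^2:=\frac1n\sum_{j=1}^n\dotp{B}{X_j}^2$ and $\|B-f_\ast\|_{L_2(\Pi_n)}^2:=\frac1n\sum_{j=1}^n\big(\dotp{B}{X_j}-f_\ast(X_j)\big)^2$; here $A_0$ is the target matrix ($f_\ast=\dotp{A_0}{\cdot}$ when the model is well specified, and the $L_2(\Pi)$-projection of $f_\ast$ otherwise, with the mismatch absorbed into the approximation-error term $\|A-A_0\|_{\rm F}^2$).

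First, the deterministic reduction. Fix any $A\in\mb L$. Expanding the squares in \eqref{eq:nuc_norm}, substituting $Y_j=f_\ast(X_j)+\xi_j$, and using that $\hat A_\eps$ minimizes the penalized empirical risk, one obtains
\[
\|\hat A_\eps-f_\ast\|_{L_2(\Pi_n)}^2\le\|A-f_\ast\|_{L_2(\Pi_n)}^2+2\dotp{\Theta}{\hat A_\eps-A}+\eps\big(\|A\|_1-\|\hat A_\eps\|_1\big).
\]
On $\m E=\{\eps\ge2\|\Theta\|_\op\}$, trace duality yields $2\dotp{\Theta}{\hat A_\eps-A}\le2\|\Theta\|_\op\|\hat A_\eps-A\|_1\le\eps\|\hat A_\eps-A\|_1$. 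Writing $B:=\hat A_\eps-A$ and decomposing $B=\m P_A(B)+\m P_A^{\perp}(B)$ along the row and column spaces of $A$ (so that $\rank(\m P_A(B))\le2\rank(A)$, $\|\hat A_\eps\|_1\ge\|A\|_1+\|\m P_A^{\perp}(B)\|_1-\|\m P_A(B)\|_1$, and $\|B\|_1=\|\m P_A(B)\|_1+\|\m P_A^{\perp}(B)\|_1$ up to the usual inequalities), the $\|\m P_A^{\perp}(B)\|_1$ terms enter with a favourable sign, which confines $B$ to a cone and, together with $\|\m P_A(B)\|_1\le\sqrt{2\rank(A)}\,\|B\|_{\rm F}$, leaves
\[
\|\hat A_\eps-f_\ast\|_{L_2(\Pi_n)}^2\le\|A-f_\ast\|_{L_2(\Pi_n)}^2+C_1\,\eps\sqrt{\rank(A)}\,\|B\|_{\rm F}.
\]

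Second, the stochastic core. What remains is a restricted strong convexity estimate: there exist constants $c,C_2>0$ and an event of probability at least $1-e^{-t}$ on which, simultaneously for all symmetric $B$ lying in the relevant cone (in particular with $\|B\|_1\le 2R_{\mb L}$, which is automatic since $\hat A_\eps,A\in\mb L$),
\[
\|B\|_{L_2(\Pi_n)}^2\ \ge\ c\,\|B\|_{\rm F}^2\ -\ C_2\Big(R_{\mb L}^2\,\frac{D\,t_{n,D}}{n}+\frac1n\Big).
\]
This is the substantive part of the argument: it follows from a generic-chaining bound for the quadratic empirical process $B\mapsto\|B\|_{L_2(\Pi_n)}^2-\|B\|_{\rm F}^2$, using that $\dotp{B}{X}$ is subgaussian with parameter $\gamma^2\|B\|_{\rm F}^2$ and invoking \eqref{eq:psi2} and \eqref{eq:equiv} to control its increments, together with a peeling argument over the dyadic scales of $\|B\|_{\rm F}$ and of the ratio $\|B\|_1/\|B\|_{\rm F}$; this stratification is exactly what produces the correction $\kappa=\log\log_2(DR_{\mb L})$, the form $t_{n,D}=(t+\kappa)\log n+\log(2D)$, and the role of the hypothesis $t_{n,D}\le cn$, which keeps the subtracted term dominated by $c\|B\|_{\rm F}^2$ in the regime of interest. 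It then remains to combine: applying the restricted strong convexity bound with $B=\hat A_\eps-A$, inserting it into the last display of the deterministic step through $\|\hat A_\eps-f_\ast\|_{L_2(\Pi_n)}^2\ge\tfrac12\|B\|_{L_2(\Pi_n)}^2-\|A-f_\ast\|_{L_2(\Pi_n)}^2$ and the elementary bound $C_1\eps\sqrt{\rank(A)}\,\|B\|_{\rm F}\le\tfrac c8\|B\|_{\rm F}^2+C_3\eps^2\rank(A)$, one obtains $\|B\|_{\rm F}^2\le C_4\big(\|A-f_\ast\|_{L_2(\Pi_n)}^2+\eps^2\rank(A)+R_{\mb L}^2\tfrac{D\,t_{n,D}}{n}+\tfrac1n\big)$. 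Finally, $\|\hat A_\eps-A_0\|_{\rm F}^2\le2\|B\|_{\rm F}^2+2\|A-A_0\|_{\rm F}^2$, a fixed-$A$ Bernstein inequality replacing $\|A-f_\ast\|_{L_2(\Pi_n)}^2$ by $\|A-A_0\|_{\rm F}^2$ up to a lower-order term (using isotropy, $\|A-A_0\|_{L_2(\Pi)}=\|A-A_0\|_{\rm F}$), and taking $\inf_{A\in\mb L}$ yield \eqref{eq:comp1}.

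The main obstacle is the restricted strong convexity estimate: comparing the empirical and population $L_2$-norms uniformly over a family of matrices that is not bounded in Frobenius norm, with the sharp dependence on $D$, $n$ and $R_{\mb L}$, requires a careful peeling over scales together with precise generic-chaining (Rudelson–Vershynin-type) control of the associated quadratic chaos, and it is there that the logarithmic factors, the $\log\log$ term, and the condition $t_{n,D}\le cn$ all originate. A comparatively routine secondary point, needed only to exhibit an explicit admissible choice of $\eps$ rather than merely to condition on $\m E$, is the operator-norm bound $\|\Theta\|_\op\le C_5\,\sigma\sqrt{D/n}$ up to logarithmic factors, which follows from a matrix-Bernstein or chaining estimate using $\var(\xi)\le\sigma^2$, the independence of $\xi$ and $X$, and \eqref{eq:psi2}; it does not enter the derivation of \eqref{eq:comp1} itself.
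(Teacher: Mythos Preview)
The paper does not contain a proof of this statement: Theorem~\ref{th:nuclear1} is quoted verbatim as Theorem~9.4 of Koltchinskii's monograph \cite{Koltchinskii2011Oracle-inequali00} and is used as a black box. There is therefore no ``paper's own proof'' to compare your proposal against.

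On its own merits, your sketch follows the standard architecture for nuclear-norm oracle inequalities (basic inequality, trace duality on $\m E$, cone condition via the $\m P_A/\m P_A^\perp$ decomposition, restricted strong convexity, then balancing), and you correctly identify the restricted strong convexity step as the substantive technical obstacle where the $\kappa=\log\log_2(DR_{\mb L})$ factor, the form of $t_{n,D}$, and the hypothesis $t_{n,D}\le cn$ originate. That said, some details are imprecise: the factor $2$ in front of $\|A-A_0\|_{\rm F}^2$ in \eqref{eq:comp1} (which the paper remarks can be sharpened to $1+\nu$) arises from a more careful comparison argument than your crude triangle-inequality step $\|\hat A_\eps-A_0\|_{\rm F}^2\le 2\|B\|_{\rm F}^2+2\|A-A_0\|_{\rm F}^2$, which would give $4$ rather than $2$; and the passage from $\|A-f_\ast\|_{L_2(\Pi_n)}^2$ to $\|A-A_0\|_{\rm F}^2$ via a ``fixed-$A$ Bernstein inequality'' cannot be followed by $\inf_{A\in\mb L}$ without losing uniformity --- in Koltchinskii's argument this is handled differently. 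These are refinements rather than structural gaps, but they are exactly what distinguishes a rough outline from the actual proof in the cited reference.
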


Constant 2 in front of $\|A-A_0\|^2_{\mathrm F}$ can be replaced by
$(1+\nu
)$ for any $\nu>0$ if $C$ is replaced by $C/\nu$.

%as4.1 #&#
%
\begin{assumption}
\label{subgauss}
The noise variable $\xi$ is such that $\|\xi\|_{\psi_2}<\infty$.
\end{assumption}

If Assumption~\ref{subgauss} is satisfied, then, whenever
%
%e4.21 #&#
%
\begin{equation}
\label{eq:eps1} \eps\geq\bar C(\gamma)\sqrt{\frac{D} n} \biggl(\sigma
\sqrt{t+\log(2D)}\vee\|\xi\|_{\psi_2}\log \biggl(2\vee \frac{\|\xi\|_{\psi_2}}{\sigma}
\biggr)\frac{t+\log(2D)}{\sqrt
n} \biggr)
\end{equation}
we have that $\Pr(\m E)\geq1-\mathrm{e}^{-t}$ (hence, (\ref{eq:comp1}) holds
with probability $1-2\mathrm{e}^{-t}$).
This follows from the following variant of the non-commutative
Bernstein's inequality.
%
%th4.4 #&#

\begin{theorem}[(Theorem~2.7 in \cite{Koltchinskii2011Oracle-inequali00})]
\label{th:bernstein}
Let $Y_1,\ldots, Y_n\in\mb R^{D\times D}$ be symmetric independent
random matrices such that $\mb EY_j=0$ and
\[
\max_{1\leq j\leq n} \bigl( \bigl \|\|Y_j\|_\op
\bigr \|_{\psi_1}\vee 2\mb E^{1/2}\|Y_j\|^2_\op
\bigr)\leq U.
\]
Let
\[
\Psi^2\geq\frac{1} n\Biggl\llVert \sum
_{i=1}^n \mb EY_i^2\Biggr
\rrVert _\op.
\]
Then, for all $t>0$, with probability $\geq1-\mathrm{e}^{-t}$
\[
\Biggl\llVert \frac{1}{\sqrt n}\sum_{j=1}^n
Y_j\Biggr\rrVert _\op\leq\bar C_1\max
\biggl(\Psi\sqrt{t+\log(2D)},U\log \biggl(\frac{U}{\Psi} \biggr)
\frac
{t+\log
(2D)}{\sqrt n} \biggr),
\]
where $\bar C_1>0$ is an absolute constant.
\end{theorem}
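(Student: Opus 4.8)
The plan is to establish the noncommutative Bernstein inequality of Theorem \ref{th:bernstein} by combining a moment bound for the matrix exponential with a truncation argument that handles the heavy-tailed $\psi_1$-behavior of $\|Y_j\|_\op$. First I would recall the standard matrix-analytic machinery: for the symmetric sum $S_n:=\frac{1}{\sqrt n}\sum_{j=1}^n Y_j$, one has the bound $\prob{\lmax{S_n}>u}\leq 2D\,\inf_{s>0}e^{-su}\,\mb E\,e^{s\lmax{S_n}}$ obtained by Chernoff's method applied to the largest eigenvalue, together with the trace exponential inequality $\mb E\,\tr\,e^{sS_n}\leq \tr\exp\l(\sum_j \log\mb E\,e^{sY_j}\r)$ in the spirit of Ahlswede--Winter and Tropp. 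Applying the same argument to $-S_n$ and taking a union bound controls $\|S_n\|_\op=\max(\lmax{S_n},\lmax{-S_n})$.

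The core of the estimate is a bound on the matrix cumulant generating function $\log\mb E\,e^{sY_j}$. For this I would split according to scale: the second-moment parameter $\Psi^2$ controls the quadratic (sub-Gaussian) regime, while the $\psi_1$-norm bound $\l\|\,\|Y_j\|_\op\,\r\|_{\psi_1}\leq U$ controls the exponential tail and hence the higher moments. The key inequality is that for $s$ small enough (specifically $s\lesssim 1/U$), one has $\mb E\,e^{sY_j}\preceq \exp\l(\tfrac{s^2}{2}g(s)\,\mb E Y_j^2\r)$ in the semidefinite order for some slowly growing factor $g(s)$, using $\mb E Y_j=0$ and the moment growth $\mb E\,\|Y_j\|_\op^m\leq m!\,U^m$ implied by the $\psi_1$ bound. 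Summing over $j$ gives $\log\mb E\,e^{sS_n}\preceq \frac{s^2}{2}g(s)\,\frac{1}{n}\sum_j \mb E Y_j^2\preceq \frac{s^2}{2}g(s)\Psi^2\,I$, after rescaling by $\sqrt n$.

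Having reduced everything to a scalar optimization, I would plug this into the Chernoff bound to get $\prob{\|S_n\|_\op>u}\leq 2D\exp\l(-\tfrac{su}{1}+\tfrac{s^2}{2}g(s)\Psi^2\r)$ and optimize over $s$ in the admissible range $0<s\lesssim 1/U$. This produces the characteristic two-regime answer: when $u$ is moderate the unconstrained optimizer $s\sim u/\Psi^2$ is admissible and yields the sub-Gaussian term $\Psi\sqrt{t+\log(2D)}$; when $u$ is large the optimizer saturates at the boundary $s\sim 1/U$ and yields the linear term $U\log(U/\Psi)\frac{t+\log(2D)}{\sqrt n}$. Setting the exponent equal to $t+\log(2D)$ (the $\log(2D)$ absorbing the dimensional factor $2D$) and inverting gives the stated maximum of the two rates.

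The main obstacle I expect is obtaining the semidefinite cumulant bound uniformly in $s$ while tracking the logarithmic factor $\log(U/\Psi)$ correctly; the subtlety is that the naive truncation of $Y_j$ at level $\sim U$ introduces a bias that must be shown negligible, and the higher moments $\mb E Y_j^2$ that appear cannot simply be bounded by $U^2$ (which would lose the $\Psi$ dependence) but must be retained as the operator $\mb E Y_j^2$ and only its normalized operator norm replaced by $\Psi^2$. Since Theorem \ref{th:bernstein} is quoted verbatim as Theorem 2.7 in \cite{Koltchinskii2011Oracle-inequali00}, I would ultimately defer the detailed verification of this cumulant estimate to that reference, presenting the argument above as the structural skeleton that makes the two-regime bound transparent.
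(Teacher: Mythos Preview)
The paper contains no proof of Theorem \ref{th:bernstein}: it is stated with the attribution ``Theorem 2.7 in \cite{Koltchinskii2011Oracle-inequali00}'' and immediately used as a black box to derive (\ref{eq:d2}) and (\ref{eq:d4}). There is nothing to compare your argument against.

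Your sketch is the standard route to such matrix Bernstein inequalities (Chernoff method plus Lieb/Tropp subadditivity of cumulants, with truncation to pass from $\psi_1$ tails to a bounded-case argument), and you already correctly note in your final paragraph that the detailed verification should be deferred to the cited reference. For the purposes of this paper, that deferral \emph{is} the proof; the preceding outline is extraneous.
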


Indeed, recall that $\Theta=\frac{1} n\sum_{j=1}^n \xi_j X_j$, and
apply Theorem~\ref{th:bernstein} to $Y_j:=\xi_j X_j$, noting that by
(\ref{eq:psi1-psi2}), (\ref{eq:psi2})
\begin{eqnarray*}
\bigl\llVert \mb E\xi^2 X^2\bigr\rrVert
_\op&\leq&\sigma^2 \mb E\|X\|_\op
^2\leq C_2 \sigma^2 D,
\\
\bigl \|\|\xi X\|_\op \bigr \|_{\psi_1}&\leq&\|\xi_1
\|_{\psi_2} \bigl \| \|X\|_\op \bigr \|_{\psi_2}\leq C(\gamma)\|\xi
\|_{\psi_2} \sqrt{D}.
\end{eqnarray*}
It implies that with probability $\geq1-\mathrm{e}^{-t}$,
%
%e4.22 #&#
%
\begin{equation}
\label{eq:d2} \Biggl\|\frac{1} n\sum_{j=1}^n
\xi_j X_j \Biggr\|_{\op}\leq C_3\sqrt {
\frac{D} n} \biggl(\sigma\sqrt{t+\log(2D)}\vee\|\xi\|_{\psi_2}\log
\biggl(2\vee\frac{\|\xi\|_{\psi_2}}{\sigma
} \biggr)\frac{t+\log(2D)}{\sqrt n} \biggr),
\end{equation}
where $C_2$, $C_3$ depend only on $\gamma$, hence giving the result.

As we mentioned above, our goal is to construct the estimator of $A_0$
which admits bounds in flavor of Theorem~\ref{th:nuclear1} that hold
with high probability under a much weaker assumption on the tail of the
noise variable $\xi$.

To achieve this goal, we follow the same pattern as before.
Let $t\geq1$ be fixed, let $k:=\lfloor t\rfloor+1, m=  \lfloor
\frac
{n}{k}  \rfloor$, and assume that $k\leq\frac{n} 2$.
Divide the data $  \{(X_j,Y_j)  \}_{j=1}^n$ into $k$
disjoint groups
$G_1,\ldots, G_k$ of size $m$ each, and define
\[
\hat A^l_\eps:=\argmin_{A\in\mb L} \biggl[
\frac{1}{|G_l|}\sum_{j\in G_l} \bigl(Y_j -
\langle A,X_j \rangle \bigr)^2+\eps \|A\|_1
\biggr]
\]
and
\[
\hat A_{\eps}^\ast=\hat A_{\eps}(t):=\med\bigl(\hat
A^1_\eps,\ldots ,\hat A^{k}_\eps
\bigr),
\]
where the geometric median is evaluated with respect to the Frobenius norm.
%
%as4.2 #&#

\begin{assumption}
\label{noise_moment}
\[
\|\xi\|_{2,1}:=\int_0^\infty\sqrt{\Pr (|
\xi|>x )}\,\mathrm{d}x<\infty.
\]
\end{assumption}

In particular, $\|\xi\|_{2,1}<\infty$ if $\mb E|\xi|^{2+\delta
}<\infty$ for some $\delta>0$, which is a mild requirement compared
to Assumption~\ref{subgauss}.
Finally, given $\alpha\in(0,1/2)$, it will be convenient to define
\[
p^\ast=p^\ast(\alpha):=\max\bigl\{p\in(0,\alpha): \psi(
\alpha;p)\geq 1\bigr\}.
\]

%th4.5 #&#
%
\begin{theorem}
\label{th:nuclear2}
Suppose that Assumption~\ref{noise_moment} is satisfied.
For any $\alpha\in(0,1/2)$, there exist constants $c$, $C$, $B$ with the
following properties:
let $\kappa:=\log\log_2 (D R_\mb L)$, $s_{n,t,D}:=(\log(2/p^\ast
(\alpha))+\kappa)\log(n/t)+\log(2D)$, and assume that
$s_{n,t,D}\leq c(n/t)$.
Then for all
\[
\eps\geq\frac{B}{p^\ast(\alpha)}\|\xi\|_{2,1}\sqrt{\frac
{Dt}{n}}
\log(2D),
\]
with probability $\geq1-2\mathrm{e}^{-t}$
%
%e4.23 #&#
%
\begin{equation}
\label{eq:d6} \bigl \|\hat A_\eps^\ast-A_0
\bigr \|^2_{\mathrm F}\leq C_\alpha\inf_{A\in\mb L}
\biggl[2\|A-A_0\|^2_{\mathrm F}+C \biggl(
\eps^2 \rank(A)+R^2_\mb L s_{n,t,D}
\frac{Dt}{n}+\frac{t}{n} \biggr) \biggr],
\end{equation}
where $C_\alpha$ is defined by (\ref{eq:c_alpha}).
\end{theorem}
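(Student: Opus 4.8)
The plan is to mirror the structure of the proof of Theorem \ref{th:lasso2}: first establish a ``weak concentration'' bound for each block estimator $\hat A^l_\eps$ using Theorem \ref{th:nuclear1}, then boost the confidence by applying Theorem \ref{th:main} with $\alpha$ the given parameter and $p = p^\ast(\alpha)$ (so that $\psi(\alpha; p^\ast(\alpha)) \geq 1$ and the exponent $e^{-k\psi}$ becomes $e^{-k} \leq e^{-t}$). The key is that the blockwise oracle inequality (\ref{eq:comp1}) applied to the $l$-th block, with sample size $m = \lfloor n/k \rfloor \asymp n/t$, holds on the event $\m E_l := \{\eps \geq 2\|\Theta_l\|_\op\}$ where $\Theta_l := \frac{1}{|G_l|}\sum_{j\in G_l}\xi_j X_j$; here the relevant ``$t$'' parameter in Theorem \ref{th:nuclear1} is replaced by $\log(1/p^\ast(\alpha))$ so that the residual probability is at most $p^\ast(\alpha)/2$, and $t_{n,D}$ becomes $s_{n,t,D} = (\log(2/p^\ast(\alpha))+\kappa)\log(n/t) + \log(2D)$, with the condition $s_{n,t,D} \leq c(n/t)$ ensuring the oracle inequality is applicable on each block.

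The first substantive step is to bound $\Pr(\m E_l^c) = \Pr(\|\Theta_l\|_\op > \eps/2)$ under Assumption \ref{noise_moment} only. Since we no longer have $\|\xi\|_{\psi_2} < \infty$, the noncommutative Bernstein inequality of Theorem \ref{th:bernstein} does not apply directly; instead one should use a moment-based matrix inequality — e.g.\ a version of the noncommutative Khintchine / Rosenthal inequality, or truncation of $\xi$ at an appropriate level combined with Bernstein — to get $\mb E\|\Theta_l\|_\op \lesssim \|\xi\|_{2,1}\sqrt{D/m}\,\log(2D) \asymp \|\xi\|_{2,1}\sqrt{Dt/n}\,\log(2D)$, using $\mb E\|X\|_\op \lesssim \sqrt D$ from (\ref{eq:psi2}) and the $\|\cdot\|_{2,1}$ norm of $\xi$ to control the heavy tail. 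Markov's inequality then converts this expectation bound into $\Pr(\|\Theta_l\|_\op > \eps/2) \leq p^\ast(\alpha)/2$ as soon as $\eps \geq \frac{B}{p^\ast(\alpha)}\|\xi\|_{2,1}\sqrt{Dt/n}\,\log(2D)$ for a suitable absolute constant $B$ — this is exactly the stated lower bound on $\eps$, and the factor $1/p^\ast(\alpha)$ appears precisely because Markov loses a factor $1/p^\ast(\alpha)$ relative to an exponential bound.

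Combining the two sources of failure, for each $l$ the bound
\begin{align*}
\|\hat A^l_\eps - A_0\|^2_{\rm F} \leq \inf_{A\in \mb L}\Bigl[2\|A-A_0\|^2_{\rm F} + C\bigl(\eps^2\rank(A) + R^2_\mb L \tfrac{D\,s_{n,t,D}}{m} + \tfrac 1 m\bigr)\Bigr] =: \rho^2
\end{align*}
holds with probability $\geq 1 - p^\ast(\alpha)$, and since $m = \lfloor n/k\rfloor \geq n/(2(t+1)) \gtrsim n/t$, the terms $R^2_\mb L D s_{n,t,D}/m$ and $1/m$ are bounded by $C' R^2_\mb L D s_{n,t,D} t/n$ and $C' t/n$ respectively, matching the right-hand side of (\ref{eq:d6}). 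Thus each $\hat A^l_\eps$ satisfies $\Pr(\|\hat A^l_\eps - A_0\|_{\rm F} > \rho) \leq p^\ast(\alpha) < \alpha$, so (\ref{weak_con}) holds with $\eps$ there equal to $\rho$ and $p = p^\ast(\alpha)$. Theorem \ref{th:main} then gives $\Pr(\|\hat A^\ast_\eps - A_0\|_{\rm F} > C_\alpha \rho) \leq e^{-k\psi(\alpha; p^\ast(\alpha))} \leq e^{-k} \leq e^{-t}$; squaring and absorbing $C_\alpha^2$ (renaming it $C_\alpha$ as in the statement, abusing notation slightly as the paper does) yields (\ref{eq:d6}) with probability $\geq 1 - e^{-t}$ — or $1 - 2e^{-t}$ once we also account for the probability that the blockwise oracle inequality itself may require an additional $e^{-\log(1/p^\ast)} \cdot (\text{\dots})$-type event, precisely as the ``$\Pr(\m E) - e^{-t}$'' appears in Theorem \ref{th:nuclear1}.

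The main obstacle is the first step: obtaining the right dimensional and heavy-tail dependence in $\mb E\|\Theta_l\|_\op$ using only Assumption \ref{noise_moment}. The subgaussian argument behind (\ref{eq:d2}) is unavailable, so one must either truncate $\xi_j$ at a level of order $\|\xi\|_{2,1}$ (or a data-dependent quantile) and separately control the contribution of the truncated-away part via the $\|\cdot\|_{2,1}$ norm — the definition $\|\xi\|_{2,1} = \int_0^\infty \sqrt{\Pr(|\xi|>x)}\,dx$ is tailored exactly to make the tail term summable after taking square roots inside a second-moment matrix bound — or invoke a moment form of the matrix Bernstein/Rosenthal inequality directly. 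Getting the logarithmic factor to be $\log(2D)$ rather than a power of $\log D$, and confirming that no extra $\log n$ creeps in beyond what is already packaged in $s_{n,t,D}$, is the delicate bookkeeping; everything downstream is a routine substitution into the machinery of Theorems \ref{th:nuclear1} and \ref{th:main}.
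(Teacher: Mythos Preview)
Your overall architecture is exactly that of the paper: obtain a weak-concentration oracle inequality for each block estimator $\hat A^l_\eps$ via Theorem~\ref{th:nuclear1} (with its confidence parameter set to $\log(2/p^\ast(\alpha))$, producing $s_{n,t,D}$ in place of $t_{n,D}$), control $\Pr(\m E_l^c)$ by Markov's inequality so that the total failure probability per block is at most $p^\ast(\alpha)$, and then apply Theorem~\ref{th:main} with $p=p^\ast(\alpha)$ to get $e^{-k\psi(\alpha;p^\ast(\alpha))}\le e^{-k}\le e^{-t}$.

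The one place you are vague---and flag as ``the main obstacle''---is precisely where the paper supplies a specific tool you do not name: the \emph{multiplier inequality} (Lemma~2.9.1 in \cite{Vaart1996Weak-convergenc00}). It gives directly
\[
\mb E\Big\|\frac{1}{m}\sum_{j=1}^m \xi_j X_j\Big\|_\op \;\le\; \frac{2\sqrt 2\,\|\xi\|_{2,1}}{\sqrt m}\max_{1\le i\le m}\mb E\Big\|\frac{1}{\sqrt i}\sum_{j=1}^i \eps_j X_j\Big\|_\op,
\]
with $\eps_j$ Rademacher, and this is exactly why the $\|\cdot\|_{2,1}$ norm is the right hypothesis. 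The Rademacher sums on the right are then handled by the \emph{same} noncommutative Bernstein inequality (Theorem~\ref{th:bernstein}) already in the paper, yielding $\mb E\|\frac{1}{\sqrt i}\sum \eps_j X_j\|_\op\lesssim \sqrt D(\sqrt{\log(2D)}+\log(2D)/\sqrt i)$ and hence $\mb E\|\Theta_l\|_\op\lesssim \|\xi\|_{2,1}\sqrt{D/m}\,\log(2D)$ with the correct logarithmic factor and no stray $\log n$. Your proposed routes (truncation of $\xi$, or a noncommutative Rosenthal inequality) could in principle be pushed through, but extracting the clean $\|\xi\|_{2,1}$ dependence from them is not automatic; the multiplier inequality is the canonical device that makes this step a one-liner. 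Once you invoke it, everything downstream in your sketch matches the paper verbatim.
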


\begin{pf}
We will start by deriving a ``weak concentration'' bound from Theorem~\ref{th:nuclear1}.
To this end, we need to estimate
\[
\mb E\|\Theta_l\|_\op:=\mb E \biggl\|\frac{1}{|G_l|}\sum
_{j\in
G_l}\xi_j X_j
\biggr\|_\op,\qquad l=1,\ldots, k.
\]
The following result is a direct consequence of the so-called
\textit{multiplier inequality} (Lemma~2.9.1 in \cite
{Vaart1996Weak-convergenc00}).
%
%le4.2 #&#

\begin{lemma}
Let $\eps_1,\ldots,\eps_m$ be i.i.d. Rademacher random variables
independent of $X_1,\ldots,X_m$.
Then
%
%e4.24 #&#
%
\begin{equation}
\label{d3} \mb E \Biggl\|\frac{1}{m}\sum_{j=1}^m
\xi_j X_j \Biggr\|_\op\leq \frac{2\sqrt2\|\xi\|_{2,1}}{\sqrt{m}}\max
_{1\leq i\leq m} \mb E \Biggl\|\frac{1}{\sqrt i}\sum
_{j=1}^i \eps_j X_j
\Biggr\|_\op.
\end{equation}
\end{lemma}

To estimate $\mb E \|\frac{1}{\sqrt i}\sum_{j=1}^i \xi_j X_j
\|_\op$, we use the formula $\mb E|\eta|=\int_0^\infty\Pr(|\eta
|\geq t)\,\mathrm{d}t$ and the tail bound of Theorem~\ref{th:bernstein}, which
implies (in a way similar to (\ref{eq:d2})) that with probability
$\geq1-\mathrm{e}^{-t}$
%
%e4.25 #&#
%
\begin{equation}
\label{eq:d4} \Biggl\|\frac{1}{\sqrt i} \sum_{j=1}^i
\eps_j X_j\Biggr \|_{\op}\leq C_4
\sqrt{D} \biggl(\sqrt{t+\log(2D)}\vee\frac{t+\log(2D)}{\sqrt i} \biggr),
\end{equation}
hence for any $1\leq i\leq m$
\[
\mb E \Biggl\|\frac{1}{\sqrt i}\sum_{j=1}^i
\eps_j X_j \Biggr\|_\op\leq C_5
\sqrt{D} \biggl(\sqrt{\log(2D)}+\frac{\log(2D)}{\sqrt i} \biggr),
\]
and (\ref{d3}) yields
\[
\mb E \Biggl\|\frac{1}{m}\sum_{j=1}^m
\xi_j X_j \Biggr\|_\op\leq C_6\| \xi
\|_{2,1}\sqrt{\frac{D} m}\log(2D).
\]
Next, it follows from Chebyshev's inequality that for any $1\leq l\leq
k$, with probability $\geq1-\frac{p^\ast(\alpha)}{2}$
\[
2\llVert \Theta_l\rrVert _\op\leq\frac{4C_6}{p^\ast(\alpha)}\|
\xi\| _{2,1}\sqrt{\frac{D} m}\log(2D).
\]
Hence, if $\alpha\in(0,1/2)$ and
\[
\eps\geq\frac{4C_6}{p^\ast(\alpha)}\|\xi\|_{2,1}\sqrt{\frac{D}m }
\log(2D),
\]
the inequality of Theorem~\ref{th:nuclear1} (with the confidence
parameter equal to $\log(2/p^\ast(\alpha))$) applied to the estimator
$\hat A^l_\eps$ gives that with probability $\geq1-p^\ast(\alpha)$
\[
\bigl \|\hat A_\eps^\ast-A_0\bigr \|^2_{\mathrm F}
\leq\inf_{A\in\mb L} \biggl[2\|A-A_0\|^2_{\mathrm F}+C
\biggl(\eps^2 \rank(A)+R^2_\mb L
s_{m,D} \frac
{D}{m}+\frac{1}{m} \biggr) \biggr].
\]
The claim (\ref{eq:d6}) now follows from Theorem~\ref{th:main}.
\end{pf}

%s5 #&#
\section{Numerical evaluation of the geometric median and simulation results}

In this section, we briefly discuss computational aspects of our method
in $\mb R^D$ equipped with the standard Euclidean norm $\|\cdot\|$,
and present results of numerical simulation.

%s5.1 #&#
\subsection{Overview of some numerical algorithms}

As was mentioned in the \hyperref[sec:intro]{introduction}, the
function $F(z):=\sum_{j=1}^k
\|z-x_j\|$ is convex, moreover, its minimum is unique unless
$\{x_1,\ldots,x_k\}$ are on the same line.

One of the computationally efficient ways to approximate $\operatorname{arg\,min}
_{z\in\mb R^D} F(z)$ is the famous \textit{Weiszfeld's algorithm}
\cite{Weiszfeld1936Sur-un-probleme00}: starting from some $z_0$ in the
affine hull of $\{x_1,\ldots,x_k\}$, iterate
%
%e5.1 #&#
%
\begin{equation}
\label{weizsfeld} z_{m+1}=\sum_{j=1}^k
\alpha^{(j)}_{m+1} x_j,
\end{equation}
where $\alpha^{(j)}_{m+1}=\frac{\|x_j-z_m\|^{-1}}{\sum_{j=1}^k \|
x_j-z_m\|^{-1}}$.
Kuhn proved \cite{kuhn1973note} that Weiszfeld's algorithm
converges to the geometric median for all but countably many initial
points (additionally, his result states that $z_m$ converges to the
geometric median if none of $z_m$ belong to $\{x_1,\ldots,x_k\}$).
It is straightforward to check that (\ref{weizsfeld}) is actually a
gradient descent scheme: indeed, it is equivalent to
\[
z_{m+1}=z_m-\beta_{m+1}g_{m+1},
\]
where $\beta_{m+1}=\frac{1}{\sum_{j=1}^k\|x_j-z_m\|^{-1}}$ and
$g_{m+1}=\sum_{j=1}^k \frac{z_m-x_j}{\|z_m-x_j\|}$ is the gradient of
$F$ (we assume that $z_m\notin\{x_1,\ldots,x_k\}$).

Ostresh \cite{ostresh1978convergence} proposed a method which
avoids the possibility of hitting one of the vertices $\{x_1,\ldots
,x_k\}$
by considering the following descent scheme: starting with some $z_0$
in the affine hull of $\{x_1,\ldots,x_k\}$, let
\[
z_{m+1}=z_m-\zeta\tilde\beta_{m+1} \tilde
g_{m+1},
\]
where $\zeta\in[1,2]$, $\tilde g_{m+1}$ is the properly defined
``generalized'' gradient (see \cite{ostresh1978convergence} for
details), and
$\tilde\beta_{m+1}=\frac{1}{\sum_{j: x_j\ne z_m}\|x_j-z_m\|^{-1}}$.
It is shown that $z_{m}$ converges to the geometric median whenever it
is unique.
Further improved modifications of original Weiszfeld's method can be
found in \cite{vardi2000multivariate}.

For other approaches to fast numerical evaluation of the geometric
median, see \cite
{overton1983quadratically,chandrasekaran1990algebraic,bose2003fast,cardot2013efficient}
and references therein.

%s5.2 #&#
\subsection{Simulation results}

%s5.2.1 #&#
\subsubsection{Principal component analysis}

Data points $X_1,\ldots,X_{156}$ were sampled from the distribution on
$\mb R^{120}$ such that
$X_1\overset{d}{=}AY$, where the coordinates of $Y$ are independent
random variables with density $p(y)=\frac{3y^2}{2(1+|y|^3)^2}$ and\vspace*{2pt}
$A$ is a full-rank diagonal matrix with 5 ``large'' eigenvalues $\{
5^{1/2},6^{1/2},7^{1/2},8^{1/2},9^{1/2}\}$
while the remaining diagonal elements are equal to $\frac{1}{\sqrt{120}}$.
Additionally, the data set contained $4$ ``outliers'' $Z_1,\ldots,Z_4$
generated from the uniform distribution on $[-20,20]^{120}$ and
independent of $X_i$'s.

In this case, the usual sample covariance matrix does not provide any
useful information about the principal components.
However, in most cases our method gave reasonable approximation to the truth.
We used the estimator described in Section~\ref{sec:pca} with\vspace*{1pt} the
number of groups $k=10$ containing $16$ observations each.
The error was measured by the spectral norm $\llVert \widehat\proj
_5-\proj_5\rrVert _\op$, where $\widehat\proj_5$ is a projector
on the
eigenvectors corresponding to $5$ largest eigenvalues of the estimator.
Figures~\ref{fig:02}, \ref{fig:03} show the histograms of the errors
evaluated over $100$ runs of the simulation.
Figure~\ref{fig:04} shows performance of a ``thresholded geometric
median'' estimator which is defined in Section~\ref{sec:final} below.

%f2 #&#
%
\begin{figure}[b]

\includegraphics{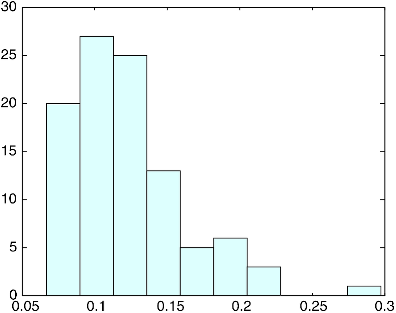}

\caption{Error of the ``geometric median'' estimator (\protect\ref{eq:median_cov}).}
\label{fig:02}
\end{figure}

%f3 #&#
%
\begin{figure}[t]

\includegraphics{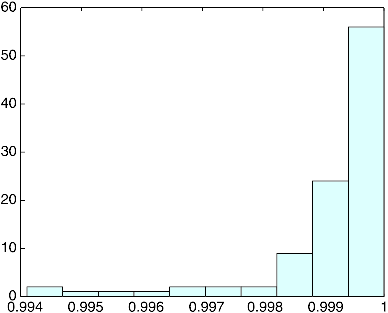}

\caption{Error of the sample covariance estimator.}
\label{fig:03}
\end{figure}
%

%f4 #&#
%
\begin{figure}[b]

\includegraphics{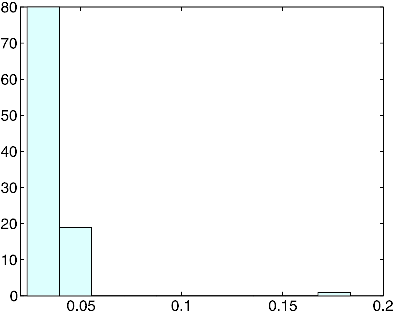}

\caption{Error of the ``thresholded geometric median'' estimator
(\protect\ref{eq:threshold}), $\nu=0.5$.}
\label{fig:04}
\end{figure}

%s5.2.2 #&#
\subsubsection{High-dimensional sparse linear regression}

The following model was used for simulation:
\[
Y_j=\lambda_0^T x_j+
\xi_j,\qquad j=1,\ldots,300,
\]
where $\lambda_0\in\mb R^{1000}$ is a vector with $10$ non-zero
entries sampled from the uniform distribution on $[-15,15]$, and
$x_j\in\mb R^{1000}$, $j=1,\ldots,300$, are generated according to the
normal distribution $N(0,I_{1000})$.
Noise $\xi_j$ was sampled from the mixture
\[
\xi_j= %
\cases{ \xi_{1,j} &\quad$\mbox{with
probability } 1-1/500$,
\cr
\xi_{2,j} &\quad$\mbox{with probability }
1/500$,} %
\]
where $\xi_{1,j}\sim N(0,1/8)$ and $\xi_{2,j}$ takes values $\pm
\frac{250}{\sqrt2}$ with probability $1/2$ each.
All parameters $\lambda_0$, $x_j$, $\xi_j$, $j=1,\ldots,300$, were sampled
independently.
Error of the estimator $\hat\lambda$ was measured by the ratio $\frac
{\|\hat\lambda-\lambda_0\|}{\|\lambda_0\|}$.
Size of the regularization parameter $\eps$ was chosen based on
$4$-fold cross validation.
On each stage of the simulation, we evaluated the usual Lasso estimator
(\ref{eq:lasso}) and the ``median Lasso'' estimator
(\ref{eq:median_lasso}) based on partitioning the observations into $4$
groups of size $75$ each.
Figures~\ref{fig:05} and~\ref{fig:06} show the histograms of the
errors over $50$ runs of the simulation.
Note that the maximal error of the ``median Lasso'' is $0.055$ while
the error of the usual Lasso exceeded $0.15$ in $18$ out of $50$ cases.

%f5 #&#
%
\begin{figure}[t]%[ht]

\includegraphics{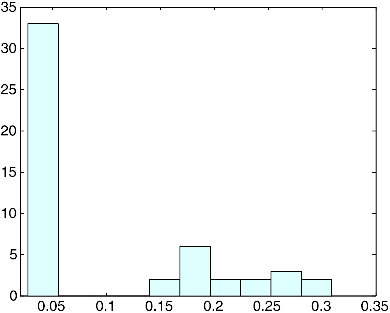}

\caption{Error of the standard Lasso estimator (\protect\ref{eq:lasso}).}
\label{fig:05}
\end{figure}

%f6 #&#
%
\begin{figure}[b]

\includegraphics{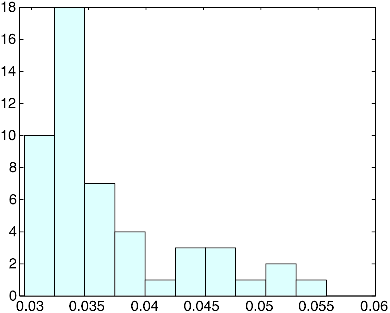}

\caption{Error of the ``median Lasso'' estimator (\protect\ref{eq:median_lasso}).}
\label{fig:06}\vspace*{-4pt}
\end{figure}

%s6 #&#
\section{Final remarks}
\label{sec:final}

Let $\hat\alpha_1,\ldots,\hat\alpha_k\geq0$, $\sum_{j=1}^k
\alpha_j=1$ be the coefficients such that
$
\hat\mu=\sum_{j=1}^k \hat\alpha_k \mu_k$
is the geometric median of a collection of estimators $\{\mu_1,\ldots
,\mu_k\}$.
Our numerical experiments reveal that performance of $\hat\mu$ can be
significantly improved by setting the coefficients below a certain
threshold level
$\nu$ to $0$, that is,\vspace*{-2pt}
%
%e6.1 #&#
%
\begin{eqnarray}
\label{eq:threshold} \tilde\alpha_j&:=&\frac{\hat\alpha_j I  \{\hat\alpha_j\geq
\nu
/k  \}}{\sum_{i=1}^k \hat\alpha_j I  \{\hat\alpha_j\geq
\nu/k
\}},
\nonumber
\\[-9pt]
\\[-9pt]
\tilde\mu&:=&\sum_{j=1}^k \tilde
\alpha_j\mu_j.
\nonumber
\end{eqnarray}
An interesting problem that we plan to address in subsequent work is
the possibility of adaptive choice of the threshold parameter.

Examples presented above cover only a small area on the map of possible
applications.
For instance, it would be interesting to obtain an estimator in the
low-rank matrix completion framework \cite
{candes2009exact,koltchinskii2011nuclear} that admits strong
performance guarantees for the heavy-tailed noise model.
Results obtained in Section~\ref{sec:low_rank} for the matrix
regression problem do not seem to yield a straightforward solution in
this case.
Another promising direction is related to design of robust techniques
for Bayesian inference and evaluation of the geometric median in the
space of probability measures. We plan to address these questions in
the future work.\vspace*{-2pt}

\begin{appendix}\label{part2}
%s7 #&#
\section*{Appendix: Proof of Lemma~\texorpdfstring{\protect\ref{lem:median}}{2.1}, part (b)}

Once again, assume that the claim does not hold and $\|x_i-z\|\leq r$,
$i=1,\ldots,  \lfloor(1-\alpha)k  \rfloor+1$.

We will need the following general description of the subdifferential
of a norm $\|\cdot\|$ in a Banach space $\mb X$ (see, e.g., \cite
{ioffe1974theory}):\vspace*{-2pt}
\[
\partial\|x\|= %
\cases{ \bigl\{x^\ast\in\mathfrak
X^\ast: \bigl \|x^\ast\bigr \|_\ast= 1, x^\ast (x)=\|
x\| \bigr\},&\quad $x\ne0$,
\cr
\bigl\{x^\ast\in\mathfrak
X^\ast: \bigl \|x^\ast\bigr \|_\ast\leq1 \bigr\} ,&\quad $x=0$,}
\]
where $\mb X^*$ is the dual space with norm $\|\cdot\|_\ast$.

For $x,u\in\mb X$, let
\[
D\bigl(\|x\|;u\bigr)=\lim_{t\searrow0}\frac{\|x+tu\|-\|x\|}{t}
\]
be the directional derivative of $\|\cdot\|$ at the point $x$ in
direction $u$.
We need the following useful fact from convex analysis:
%
%le7.1 #&#

\begin{lemma}
\label{grad}
There exists $g^\ast:=g^\ast_{x,u}\in\partial\|x\|$ such that $D(\|
x\|;u)= \langle g^\ast,u \rangle$, where
$
 \langle g^\ast,u \rangle:=g^\ast(u)$.
\end{lemma}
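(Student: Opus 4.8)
The plan is to deduce the statement from two elementary facts about the norm regarded as a convex function on $\mb X$: the difference quotients that define $D(\|x\|;u)$ are monotone in the step size, and the Hahn--Banach theorem produces a linear functional that supports $\|\cdot\|$ at $x$ and is extremal in the prescribed direction $u$. Nothing deeper than convexity and Hahn--Banach is needed, so I expect no serious obstacle; the single point that must be handled with a little care is verifying that the functional furnished by Hahn--Banach actually lies in $\partial\|x\|$, and not merely that it is dominated by the directional derivative.

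First I would note that for fixed $x,u\in\mb X$ the map $t\mapsto\frac{\|x+tu\|-\|x\|}{t}$ is nondecreasing on $(0,\infty)$ (an immediate consequence of convexity of $\|\cdot\|$) and satisfies $-\|u\|\le\frac{\|x+tu\|-\|x\|}{t}\le\|u\|$ by the triangle inequality; hence the limit $D(\|x\|;u)$ exists, is finite, and $|D(\|x\|;u)|\le\|u\|$. Write $p(v):=D(\|x\|;v)$. Convexity of $\|\cdot\|$ also shows directly that $p$ is positively homogeneous and subadditive, i.e.\ sublinear, and that, evaluating the monotone quotient at $t=1$,
\[
p(v)\le\|x+v\|-\|x\|\qquad\text{for every }v\in\mb X.
\]

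Next I would apply the Hahn--Banach theorem to the sublinear functional $p$ (starting from the one-dimensional subspace $\mathbb R u$, on which the linear functional $\lambda u\mapsto\lambda\,p(u)$ is dominated by $p$ thanks to $p(u)+p(-u)\ge p(0)=0$): there is a linear functional $g^\ast$ on $\mb X$ with $g^\ast\le p$ everywhere and $g^\ast(u)=p(u)$. From $g^\ast(v)\le p(v)\le\|v\|$ and $-g^\ast(v)=g^\ast(-v)\le p(-v)\le\|v\|$ we get $\|g^\ast\|_\ast\le1$, so $g^\ast\in\mb X^\ast$. Combining $g^\ast\le p$ with the displayed inequality and setting $v=y-x$ gives $g^\ast(y-x)\le\|y\|-\|x\|$ for all $y\in\mb X$, which is exactly the subgradient inequality for $\|\cdot\|$ at $x$; hence $g^\ast\in\partial\|x\|$. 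If $x\ne0$, choosing $y=0$ and $y=2x$ forces $g^\ast(x)=\|x\|$, so that $\|g^\ast\|_\ast=1$, in agreement with the description of $\partial\|x\|$ recalled above; if $x=0$ the bound $\|g^\ast\|_\ast\le1$ is all that is required. In either case $D(\|x\|;u)=p(u)=g^\ast(u)=\dotp{g^\ast}{u}$, which is the claim. (Alternatively, one may simply invoke the standard identity $D(\|x\|;u)=\max_{g^\ast\in\partial\|x\|}\dotp{g^\ast}{u}$, the maximum being attained because $\partial\|x\|$ is weak-$\ast$ compact by the Banach--Alaoglu theorem; see \cite{ioffe1974theory}.)
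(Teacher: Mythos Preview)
Your proof is correct. It differs from the paper's argument in the mechanism used to produce $g^\ast$. The paper first observes (citing \cite{ioffe1974theory}) that $p(\cdot)=D(\|x\|;\cdot)$ is a continuous convex function of the direction, picks any $\tilde g$ in the subdifferential $\partial p(u)$, and then uses positive homogeneity of $p$ (via the inequality $sD(\|x\|;v)\ge D(\|x\|;u)+\dotp{\tilde g}{sv-u}$ with $s\to\infty$ and $s=0$) to force $\tilde g\in\partial\|x\|$ and $\dotp{\tilde g}{u}=p(u)$. You instead apply Hahn--Banach directly to the sublinear functional $p$, extending the obvious linear functional on $\mathbb R u$ and then checking membership in $\partial\|x\|$ by the global bound $p(v)\le\|x+v\|-\|x\|$. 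Your route is a little more self-contained, since it does not need to invoke continuity of $p$ or the existence theorem for subgradients (which is itself a Hahn--Banach consequence); the paper's route is shorter once that machinery is granted. Your parenthetical alternative via the max formula and Banach--Alaoglu is a third standard path and is also fine.
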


\begin{pf}
In follows from the results of Chapter~4 in \cite{ioffe1974theory} that
$D(\|x\|;u)$ is a continuous convex function of $u$ (for fixed $x$),
hence its subdifferential is non-empty.
Let $\tilde g\in\partial D(\|x\|;u)$. Then for all $s>0, v\in\mb X$
\[
sD\bigl(\|x\|;v\bigr)=D\bigl(\|x\|;sv\bigr)\geq D\bigl(\|x\|;u\bigr)+ \langle\tilde g,sv-u \rangle.
\]
Letting $s\to\infty$, we get $D(\|x\|;v)\geq \langle\tilde
g,v \rangle$,
hence $\tilde g\in\partial\|x\|$.
Taking $s=0$, we get $D(\|x\|;u)\leq \langle\tilde g,u
\rangle$, hence
$g^\ast:=\tilde g$ satisfies the requirement.
\end{pf}

Using Lemma~\ref{grad}, it is easy to see that there exist $g^\ast
_j\in\partial\|x_j-x_\ast\|$, $j=1,\ldots, k$ such that
\[
DF \biggl(x_\ast;\frac{z-x_\ast}{\|z-x_\ast\|} \biggr)=-\sum
_{j:
x_j\ne
x_\ast} \frac{ \langle g^\ast_j,z-x_\ast \rangle}{\|
z-x_\ast\|}+\sum_{j=1}^k
I\{ x_j=x_\ast\}.
\]
Moreover, for any $u$, $DF(x_\ast;z-x_\ast)\geq0$ by the definition
of $x_\ast$.
Note that for any $j$
%
%e7.1 #&#
%
\setcounter{equation}{0}
\begin{equation}
\label{eq:app1} \frac{ \langle g^\ast_j,z-x_\ast \rangle}{{\|z-x_\ast\|
}} =\frac{ \langle g^\ast_j,x_j-x_\ast \rangle+
\langle g^\ast_j,z-x_j \rangle}{{\|z-x_\ast\|}}.
\end{equation}
By the definition of $g^\ast_j$ and triangle inequality,
\[
\bigl\langle g^\ast_j,x_j-x_\ast
\bigr\rangle=\|x_j-x_\ast\|\geq \|z-x_\ast\|-\|
z-x_j\|
\]
and, since $\|g^\ast_j\|_\ast\leq1$,
\[
\bigl\langle g^\ast_j,z-x_j \bigr\rangle
\geq-\|z-x_j\|.
\]
Substituting this in (\ref{eq:app1}), we get
\[
\frac{ \langle g^\ast_j,z-x_\ast \rangle}{{\|z-x_\ast\|
}}\geq1-2\frac{\|
z-x_j\|}{{\|z-x_\ast\|}}> 1-\frac{2}{C_\alpha},
\]
hence
\[
DF \biggl(x_\ast;\frac{z-x_\ast}{\|z-x_\ast\|} \biggr)< -(1-\alpha )k \biggl(1-
\frac{2}{C_\alpha} \biggr)+\alpha k\leq0
\]
whenever $C_\alpha\geq\frac{2(1-\alpha)}{1-2\alpha}$.
\end{appendix}

% zodis "Acknowledgments" paliekamas pagal autoriu
\section*{Acknowledgements}

S. Minsker was supported by grants NSF DMS-0847388, NSF CCF-0808847,
and R01-ES-017436 from the National Institute of Environmental Health
Sciences (NIEHS) of the National Institutes of Health (NIH).

I want to thank Anirban Bhattacharya, David Dunson, the anonymous
Referees and the Area Editor for their valuable comments and
suggestions, and Philippe Rigollet for pointing out several missing references.

%\begin{supplement}%[id=suppA]
%\sname{Supplement A}
%\stitle{}
%\slink[doi]{10.3150/00-BEJXXXXSUPP} %[doi,text={...}] - jei reikia
%suskaldyti doi
%\sdatatype{.pdf}
%\sfilename{BEJ000\_supp.pdf}
%\sdescription{}
%\end{supplement}

%\bibitem[\protect\citeauthoryear{}{()}]{r1}
%\bibitem{r1}
% imsref loaded by audrone.aklyte, 2014-07-17 14:28:49
%
% imsref loaded by audrone.aklyte, 2014-07-18 09:37:44

\printhistory
\end{document}